\theoremstyle{definition} % Define theorem styles here based on the definition style (used for definitions and examples)
\newtheorem{definition}{Definition}
\theoremstyle{plain} % Define theorem styles here based on the plain style (used for theorems, lemmas, propositions)
\newtheorem{theorem}{Theorem}
\theoremstyle{plain} % Define theorem styles here based on the plain style (used for theorems, lemmas, propositions)
\newtheorem{lemma}{Lemma}
\theoremstyle{remark} % Define theorem styles here based on the remark style (used for remarks and notes)
\newtheorem{remark}{Remark}
\title{\normalfont\spacedallcaps{Weak transport equation on a bounded domain}} % The article title
\author{\spacedlowsmallcaps{Jacopo Tenan\textsuperscript{1}}} % The article author(s) - author affiliations need to be specified in the AUTHOR AFFILIATIONS block
\date{} % An optional date to appear under the author(s)
\newcommand{\R}{\mathbb{R}}
\newcommand{\N}{\mathbb{N}}
\begin{document}

%----------------------------------------------------------------------------------------
%	HEADERS
%----------------------------------------------------------------------------------------

\renewcommand{\sectionmark}[1]{\markright{\spacedlowsmallcaps{#1}}} % The header for all pages (oneside) or for even pages (twoside)
\lehead{\mbox{\llap{\small\thepage\kern1em\color{halfgray} \vline}\color{halfgray}\hspace{0.5em}\rightmark\hfil}} % The header style

\pagestyle{scrheadings} % Enable the headers specified in this block

%----------------------------------------------------------------------------------------
%	TABLE OF CONTENTS & LISTS OF FIGURES AND TABLES
%----------------------------------------------------------------------------------------

\maketitle % Print the title/author/date block

\setcounter{tocdepth}{2} % Set the depth of the table of contents to show sections and subsections only

%----------------------------------------------------------------------------------------
%	ABSTRACT
%----------------------------------------------------------------------------------------

\section*{Abstract} % This section will not appear in the table of contents due to the star (\section*) 
The article studies the transport equation that governes the motion of a fluid in a bounded domain, under the hypothesis of zero velocity at the boundary and supposing the incompressible nature of the fluid. Together with existence and uniqueness results, we study the DiPerna-Lions stability problem, in the case of a bounded domain.

\tableofcontents % Print the table of contents

%\listoffigures % Print the list of figures

%\listoftables % Print the list of tables

%----------------------------------------------------------------------------------------
%	AUTHOR AFFILIATIONS
%----------------------------------------------------------------------------------------

\let\thefootnote\relax\footnotetext{\textsuperscript{1} \textit{Dipartimento di Matematica, Università degli Studi di Roma "Tor Vergata", e-mail: \href{mailto:tenan@mat.uniroma2.it}{tenan@mat.uniroma2.it}}}

%----------------------------------------------------------------------------------------

%\newpage % Start the article content on the second page, remove this if you have a longer abstract that goes onto the second page

%----------------------------------------------------------------------------------------
%	INTRODUCTION
%----------------------------------------------------------------------------------------

\section{Introduction and classical theory}

%A statement requiring citation \cite{Figueredo:2009dg}.

In the following, we will always consider $\Omega$ to be an open, bounded, connected and simply connected domain in $\R^N$, with smooth boundary. For every time $t$, it is possible to consider the density (or \textit{concentration}) of a fluid over this domain, quantified by the function $\rho(t):\Omega\to\R$. Under the action of a vector field $u=u(x,t)\in\R^N$, that measures the velocity of the motion of the fluid at the point $x\in\Omega$ and the time $t$, the quantity $\rho$ varies under the law
\begin{equation}\label{trasportodiquestocap} \begin{cases}\rho_t(x,t)-u(x,t)\cdot \nabla \rho(x,t)=0\\
\rho(x,0)=\rho_0(x)
\end{cases}\end{equation}
that describes the evolution of the initial density $\rho_0$ via transport equation.
\paragraph{Classical theory} The transport equation in a bounded domain has a well-known regular theory, that assures the existence and uniqueness of the solution in the case of a regular velocity field $u=u(x,t)$ and a regular initial density $\rho_0$. In particular, solving the equation via characteristics method, it is straighforward that the initial values provide naturally upper and lower bound to the solution at every time. This facts are summarized by the following theorem, see \cite{Figueredo:2009dg2}.
\begin{theorem}\label{maintransportequation} Let $\Omega$ be a bounded domain. Let $u(x,t)\in C([0,T]; C^1(\overline\Omega)^N)$ with $\nabla \cdot u=0$ and $u(x,t)=0$ for every $(x,t)\in\partial\Omega\times[0,T]$. Let $\rho_0\in C^1(\overline\Omega;\R)$. Then the problem \eqref{trasportodiquestocap} has a unique solution $\rho\in C^1([0,T]\times\overline\Omega;\R)$. Moreover: \begin{enumerate}[label=(\roman*)]
\item if $\alpha,\beta\in \R$ are such that $\alpha\leq \rho_0(x)\leq\beta$ for every $x\in\overline\Omega$, then
$$\rho(x,t)\in [\alpha,\beta] \qquad \forall (x,t)\in [0,T]\times\overline\Omega$$
\item The density solution $\rho$ satisfies a mass incompressibility property, that is $\|\rho(t)\|_q=\|\rho_0\|_q$, for every $q\in[1,\infty]$ and $t\in [0,T]$.
\end{enumerate}
\end{theorem}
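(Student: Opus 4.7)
The strategy is the classical method of characteristics, adapted to exploit the two structural hypotheses on $u$: the vanishing trace on $\partial\Omega$, which keeps characteristics confined to $\overline\Omega$, and the incompressibility $\nabla\cdot u=0$, which makes the induced flow volume preserving. For each $x\in\overline\Omega$ I would consider the characteristic ODE
\[ \dot X(t)=-u(X(t),t),\qquad X(0)=x, \]
so that a $C^1$ function $\rho$ solves \eqref{trasportodiquestocap} if and only if $t\mapsto\rho(X(t),t)$ is constant. Since $u\in C([0,T];C^1(\overline\Omega)^N)$, Cauchy--Lipschitz produces a unique local solution, and standard ODE regularity ($C^1$ dependence on initial data) makes the flow $\Phi_t(x):=X(t,x)$ a $C^1$ map in $(t,x)$.

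The crucial step is to upgrade local existence to global existence on $[0,T]$ \emph{while staying inside} $\overline\Omega$, and this is where $u|_{\partial\Omega}=0$ enters. If $x\in\partial\Omega$ the constant path $X\equiv x$ solves the ODE and by pointwise uniqueness is the only solution; so $\partial\Omega$ consists of fixed points of the flow. If a trajectory issuing from some $x\in\Omega$ first reached $\partial\Omega$ at a time $t_0>0$ at a point $y$, running the ODE backwards from $(y,t_0)$ and comparing with the constant trajectory through $y$ would force $x=y$, a contradiction. Thus each $\Phi_t$ sends $\overline\Omega$ into itself; reversing time gives $\Phi_t^{-1}$, and $\Phi_t:\overline\Omega\to\overline\Omega$ is a $C^1$-diffeomorphism for every $t\in[0,T]$.

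Setting $\rho(x,t):=\rho_0(\Phi_t^{-1}(x))$ then gives a $C^1$ function that is constant along characteristics by construction and coincides with $\rho_0$ at $t=0$; a direct computation shows it satisfies \eqref{trasportodiquestocap}. Uniqueness is the same principle in reverse: the difference of two $C^1$ solutions is itself a $C^1$ solution of the same PDE with zero initial datum, and being constant along characteristics it vanishes identically. Property (i) is then immediate from the representation formula, since $\Phi_t^{-1}(x)\in\overline\Omega$. For (ii) I would invoke Liouville's formula, which gives an ODE for $\det D\Phi_t$ with right-hand side proportional to $(\nabla\cdot u)\circ\Phi_t$; hence $\nabla\cdot u=0$ forces $\det D\Phi_t\equiv 1$, and the change of variables $y=\Phi_t^{-1}(x)$ yields $\|\rho(t)\|_q=\|\rho_0\|_q$ for $q\in[1,\infty)$, while the case $q=\infty$ follows from (i) applied with $\alpha=\inf_{\overline\Omega}\rho_0$ and $\beta=\sup_{\overline\Omega}\rho_0$.

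I expect the main obstacle to be precisely the confinement argument: showing that no characteristic can escape $\overline\Omega$ is exactly what allows one to dispense with additional boundary conditions for $\rho$, and it rests both on the $C^1$ regularity of $u$ (needed to invoke pointwise ODE uniqueness) and on the Dirichlet hypothesis $u|_{\partial\Omega}=0$. Once the flow is established as a volume-preserving $C^1$-diffeomorphism of $\overline\Omega$ onto itself, everything else in the statement reduces to routine manipulation of the explicit representation $\rho(x,t)=\rho_0(\Phi_t^{-1}(x))$.
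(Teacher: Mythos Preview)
The paper does not actually prove this theorem: it is stated as a classical result with a citation, and the surrounding text only remarks that ``solving the equation via characteristics method, it is straightforward that the initial values provide naturally upper and lower bound to the solution at every time.'' Your proposal carries out precisely this characteristics argument---including the confinement step (boundary points are equilibria, so interior trajectories never reach $\partial\Omega$), which the paper itself invokes later in Figure~\ref{fig:boundary} and the remark following it---and the reasoning is correct.
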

The necessity to consider a weaker formulation of the problem is due, among the other things, to the presence of this equation in the Navier-Stokes (density dependent) system, where the weak interpretation of the equations plays an important role, in order to study the well-posedness problem. 

In this article we follow the seminal work \cite{Figueredo:2009dg} by DiPerna-Lions, to prove existence and uniqueness of weak solution to the transport equation, together with a fundamental stability theorem. DiPerna-Lions' work studies the transport equation in the whole space $\R^N$. However, in their work there is no mention of the bounded domain case. In the following, we will uniquely inspect this "new" case. In order to trace the hypothesis of the classical theory in a bounded domain, we will suppose that the velocity field is free-divergence and with zero boundary conditions, in weak (trace) sense. These hypothesis will be helpful in a moment, since we hope to apply a limit argument to the classical solutions. We will avoid the formalism of weak theory, always writing the distributional interpretation of the weak derivatives in the (rigorous) integral form.

The paper is concretely inspired by few lines in \cite{kimchoe1}, where the results of the work by DiPerna-Lions are applied to deduce a weak-* convergence. Through this paper, it will seem that the hypothesis on the initial data are a bit relaxed: in the stability theorem we will ask the velocity field to be continuous on $\overline\Omega$. In the context of the weak formulation theory of the $3$-dimensional Navier-Stokes system, this request is ensured by the typical hypothesis of the weak velocity field in the weak-strong theory, for example $u\in H^2(\Omega)$, that in $N=3$ assures the continuity until the boundary of $\Omega$ (see classical results, e.g. \cite{evans10}).
%----------------------------------------------------------------------------------------
%	RESULTS AND DISCUSSION
%----------------------------------------------------------------------------------------

\subsection{Notations and preliminaries}

Given a domain $\Omega\subseteq\R^N$, we indicate with $W^{k,q}(\Omega)$ the standard Sobolev space over $\Omega$. The closure of the test functions in the topology of this space is $W_0^{k,q}(\Omega)$. For a function $v\in W^{k,q}(\Omega)$, we say that $\nabla \cdot v=0$, i.e. $v$ is divergence-free in the weak sense, if 
\begin{equation}
\int_\Omega v\cdot \nabla \varphi \ dx=0
\end{equation}
for every $\varphi\in C_c^\infty(\Omega)$. We set $W_{0,\sigma}^{k,q}(\Omega):=\{v\in W_0^{k,q}(\Omega): \ \nabla\cdot v=0\}$. This space, equipped with the norm $\|\cdot\|_{W^{k,q}(\Omega)}$, is a Banach space. It will be useful the following theorem that collects various standard results about the convergence in measure.
\begin{theorem}\label{propositionconvmeas31} Let $(\Omega,\mathcal{M},\mu)$ be a measure space, with $\mu(\Omega)<\infty$. Let $f_n,f$ be measurable functions over $\Omega$. Then the following properties hold.
\begin{enumerate}[label=(\roman*)]
\item If $f_n\to f$ in measure and exists $g\in L^p(\Omega)$ such that $|f_n|\leq g$, then $f_n\to f$ in $L^p(\Omega)$.
\item If $f_n\to f$ in measure and $\beta$ is a continuous function over $\R$, then $\beta(f_n)\to \beta(f)$ in measure.
\item Let $f_n$ a sequence of measurable functions, such that for every $\beta_k$  piecewise differentiable such that 
$$\beta_k(t):=\begin{cases}
\beta_k(t)=0 & |t|\leq \frac1k\\
\beta_k'(t)>0 & |t|>\frac1k\\
\beta_k, \ \beta_k' \ \text{are bounded}
\end{cases}$$
it exists $v_k$ measurable function such that 
$$\beta_k(f_n)\to v_k \qquad \text{in measure as $n\to\infty$}$$
If moreover $f_n\in L^p(\Omega)$, with $\displaystyle \sup_{n\in\N}\|f_n\|_{L^p(\Omega)}<\infty$, it follows that exists $f$ measurable function such that 
$$f_n\to f \qquad \text{in measure as $n\to\infty$}$$
\end{enumerate}
\end{theorem}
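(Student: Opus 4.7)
Parts (i) and (ii) are immediate from the classical \emph{subsequence principle} on a finite measure space: a sequence converges in measure if and only if every subsequence contains a sub-subsequence converging almost everywhere. For (i), I extract $f_{n_j}\to f$ a.e., apply Lebesgue's dominated convergence theorem with the dominant $g\in L^p(\Omega)$ to obtain $f_{n_j}\to f$ in $L^p$, and lift to the full sequence by the principle (uniqueness of the limit). For (ii), I extract $f_{n_j}\to f$ a.e., use continuity of $\beta$ pointwise to get $\beta(f_{n_j})\to\beta(f)$ a.e.\ (hence in measure, since $\mu(\Omega)<\infty$), and conclude again by the principle.

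\paragraph{Strategy for (iii).}
Each $\beta_k$ acts as a ``windowed identity'': it vanishes on $[-1/k,1/k]$ but is strictly monotone and continuously invertible on each connected component of $\{|t|>1/k\}$. The family $\{v_k\}$ should therefore reconstruct a pointwise limit for $f_n$ up to two defects: the information lost inside the window $[-1/k,1/k]$ (cured by letting $k\to\infty$, exploiting that $\beta_k(2/k)>0$), and the possibility that $f_n$ escapes to infinity (cured by the $L^p$ bound through Markov's inequality).

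\paragraph{Detailed steps for (iii).}
\emph{(1) Diagonal extraction.} From $\beta_k(f_n)\to v_k$ in measure I pass to an a.e.-convergent sub-subsequence for each $k$ and diagonalise, obtaining $g_j\subseteq f_n$ with $\beta_k(g_j)\to v_k$ a.e.\ simultaneously for every $k$; let $E\subset\Omega$ be the resulting full-measure set. \emph{(2) Pointwise reconstruction on $E$.} If some $v_k(x)\neq 0$, then $\beta_k(g_j(x))$ is eventually of the same nonzero sign, so $g_j(x)$ lies in the corresponding component of $\{|t|>1/k\}$; by continuity of $\beta_k^{-1}$ on the image of $\beta_k$ one has $g_j(x)\to\beta_k^{-1}(v_k(x))=:f(x)$, a value independent of the chosen such $k$. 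If instead $v_k(x)=0$ for every $k$, the inequality $|\beta_k(t)|\geq\beta_k(2/k)>0$ on $|t|\geq 2/k$ rules out $|g_j(x)|>2/k$ infinitely often; sending $k\to\infty$ yields $g_j(x)\to 0=:f(x)$. \emph{(3) Use of the $L^p$ bound.} The obstruction to (2) is $v_k(x)=\pm L_k$, $L_k:=\sup\beta_k$ unattained, which forces $|g_j(x)|\to\infty$; Markov's inequality $\mu(\{|f_n|>R\})\leq M^p/R^p$ with $M:=\sup_n\|f_n\|_{L^p}$ makes $\bigcap_R\liminf_j\{|g_j|>R\}$ negligible, so $f$ is defined a.e. \emph{(4) Full sequence.} Thus $g_j\to f$ a.e., hence in measure. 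The reconstruction $\{v_k\}\mapsto f$ is deterministic and the $v_k$ do not depend on the subsequence chosen, so every subsequence of $f_n$ admits a further subsequence converging to the \emph{same} $f$ in measure, and the subsequence principle yields $f_n\to f$ in measure.

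\paragraph{Main obstacle.}
The delicate piece is (2) combined with (3): one has to simultaneously recover the information that $\beta_k$ erases on $[-1/k,1/k]$ (via $k\to\infty$ and the monotonicity structure of $\beta_k$) and preclude mass escaping to infinity (where the $L^p$ hypothesis, otherwise unused, becomes essential). Parts (i) and (ii) need no ingredient beyond the subsequence principle.
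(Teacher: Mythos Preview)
The paper does not prove this theorem at all: it is stated in the preliminaries section as a collection of ``various standard results about the convergence in measure'' and is used later without justification. There is therefore no paper proof to compare your proposal against.

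That said, your argument is correct. Parts (i) and (ii) are exactly the standard subsequence-principle proofs. For part (iii), your reconstruction of $f$ from the family $\{v_k\}$ is the right idea and the case analysis is sound. Two small points worth making explicit if you write this up: first, the independence of $f(x)$ from the choice of $k$ with $v_k(x)\neq 0$ follows simply from uniqueness of the pointwise limit of $g_j(x)$; second, the set where $|g_j(x)|\to\infty$ coincides with $\{x:\exists k,\ |v_k(x)|=L_k\}$ (the supremum $L_k$ is never attained precisely because $\beta_k'>0$ on $|t|>1/k$), so this exceptional set depends only on the data $\{v_k\}$ and not on the particular diagonal subsequence $g_j$. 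This is what makes your step (4) go through cleanly: any subsequence of $f_n$ yields, after diagonalisation, the \emph{same} a.e.\ limit $f$, and the subsequence principle then gives $f_n\to f$ in measure for the full sequence.
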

We also have to remember the following theorem about uniform convergence in time-dipendent Banach spaces (a Banach spaces version of Ascoli-Arzelà's theorem).
\begin{theorem}\label{lemmaan194} Let $X$ be a Banach space, and let $-\infty<a<b<\infty$. Let $f_n\in C([a,b];X)$ be a sequence such that, for every $t_0\in [a,b]$ and for every $[a,b]\ni t_n\to t_0$ 
\begin{equation}\label{limitconditionfntn94}\lim_{n\to\infty}\|f_n(t_n)-f(t_0)\|_X=0\end{equation}
with $f\in C([a,b];X)$. Then $f_n\to f$ in $C([a,b];X)$.
\end{theorem}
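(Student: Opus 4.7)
The plan is to proceed by contradiction and exploit the compactness of the interval $[a,b]$. Suppose the conclusion fails, so that $f_n\not\to f$ uniformly. Then there exist $\varepsilon>0$, a subsequence $(n_k)$, and points $s_k\in[a,b]$ such that
$$\|f_{n_k}(s_k)-f(s_k)\|_X\ge\varepsilon \qquad \forall k\in\N.$$

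Since $[a,b]$ is compact, I would invoke Bolzano--Weierstrass to pass to a further subsequence (unrelabeled) for which $s_k\to t_0$ for some $t_0\in[a,b]$. The slightly delicate point is that the hypothesis \eqref{limitconditionfntn94} is phrased for the \emph{full} sequence $n\in\N$, not for a subsequence, so I would extend $(s_k)$ to a sequence $(t_n)_{n\in\N}\subset[a,b]$ by setting $t_{n_k}:=s_k$ for the selected indices and $t_n:=t_0$ for all remaining $n$. Then $t_n\to t_0$ in $[a,b]$, and the hypothesis yields $\|f_n(t_n)-f(t_0)\|_X\to 0$; restricting to the subsequence $n_k$ gives
$$\|f_{n_k}(s_k)-f(t_0)\|_X\to 0.$$

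To close the contradiction I would combine this with the continuity of $f\in C([a,b];X)$, which produces $\|f(s_k)-f(t_0)\|_X\to 0$ since $s_k\to t_0$. The triangle inequality then implies $\|f_{n_k}(s_k)-f(s_k)\|_X\to 0$, in direct contradiction with the uniform lower bound $\varepsilon$. This proves that $f_n\to f$ in $C([a,b];X)$.

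I do not anticipate any real obstacle in this argument. The only point that requires some care is the bookkeeping used to translate the subsequential information coming from Bolzano--Weierstrass into an application of the hypothesis on the full sequence; the standard device of padding the missing indices with the limit $t_0$ handles this cleanly.
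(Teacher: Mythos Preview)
Your argument is correct and complete; the padding device you use to pass from the subsequence to the full sequence is exactly the standard trick, and the triangle inequality together with the continuity of $f$ closes the contradiction cleanly. The paper itself states this theorem only as a preliminary fact without proof, so there is no approach to compare against.
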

This easy integral estimate will be useful in the last part of the paper.
\begin{lemma}\label{teoremaunifintgfn1a} Let $p\in (1,\infty)$. Let $f_n\in L^p(\Omega)$ a sequence of function in $L^p(\Omega)$ such that $\displaystyle \sup_{n\in\N}\|f_n\|_p<\infty$. Then, for every $\varepsilon>0$ exists $M_\varepsilon>0$ such that 
\begin{equation}\sup_{n\in\N}\left\{\int_{\{x\in\Omega: \ |f_n(x)|>M_\varepsilon\}}|f_n(x)| \ dx\right\}<\varepsilon\end{equation}
\end{lemma}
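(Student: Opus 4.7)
The plan is to exploit the extra integrability coming from $p>1$ via a Hölder/Chebyshev argument, so that the excess mass on $\{|f_n|>M\}$ decays uniformly in $n$ as $M\to\infty$. Set $C:=\sup_{n\in\N}\|f_n\|_p<\infty$ and, for a parameter $M>0$, write $E_{n,M}:=\{x\in\Omega:\ |f_n(x)|>M\}$. The quantity to control is $\int_{E_{n,M}}|f_n|\,dx$.

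The first step is to apply Hölder's inequality with exponents $p$ and $p'=p/(p-1)$:
\[
\int_{E_{n,M}}|f_n(x)|\,dx \leq \|f_n\|_{L^p(\Omega)}\cdot |E_{n,M}|^{1/p'}
\leq C\cdot |E_{n,M}|^{(p-1)/p}.
\]
The second step is to estimate $|E_{n,M}|$ by Chebyshev's inequality: since $|f_n|^p\geq M^p$ on $E_{n,M}$,
\[
|E_{n,M}|\leq \frac{1}{M^p}\int_\Omega |f_n(x)|^p\,dx \leq \frac{C^p}{M^p}.
\]
Combining these two estimates gives the bound
\[
\int_{E_{n,M}}|f_n(x)|\,dx\leq C\cdot\left(\frac{C^p}{M^p}\right)^{(p-1)/p} = \frac{C^{\,p}}{M^{\,p-1}},
\]
which is independent of $n$ and tends to $0$ as $M\to\infty$ precisely because $p>1$.

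The final step is to choose $M_\varepsilon$ large enough so that $C^{\,p}/M_\varepsilon^{\,p-1}<\varepsilon$, for example $M_\varepsilon:=(C^p/\varepsilon)^{1/(p-1)}+1$. There is no genuine obstacle here; the only sensitive point is that the argument breaks down at $p=1$ (which is consistent with the fact that uniform $L^1$ bounds do not imply equi-integrability), and this is exactly where the hypothesis $p\in(1,\infty)$ is used, through the positivity of the exponent $p-1$.
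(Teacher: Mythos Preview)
Your proof is correct: the H\"older--Chebyshev combination gives the uniform bound $\int_{\{|f_n|>M\}}|f_n|\,dx\le C^{p}/M^{p-1}$, which goes to zero as $M\to\infty$ precisely because $p>1$, and the choice of $M_\varepsilon$ is then immediate.

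There is nothing to compare against: the paper states this lemma as a preliminary ``easy integral estimate'' and does not supply a proof. Your argument is the standard one and is exactly what the author presumably has in mind.
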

%----------------------------------------------------------------
\section{Weak theory}
\begin{definition}[Linear transport equation] Let $\Omega$ be a bounded domain in $\R^N$ and $T>0$. Consider $p\in [1,\infty]$ and let $q$ be its conjugate, such that $\frac1p+\frac1q=1$. Let $u\in L^1(0,T;W_0^{1,q}(\Omega))$ be a velocity field over $(0,T)\times\Omega$, with $\nabla \cdot u=0$, i.e. $u$ satisfies the divergence-free property, in the weak sense. Let $\rho_0\in L^p(\Omega)$ be the initial density. We say that \textit{the density $\rho\in L^\infty(0,T;L^p(\Omega))$ satisfies the (weak) transport equation}  
\begin{equation}\label{trasportodiqua}\begin{cases}\rho_t-u\cdot \nabla \rho=0 \quad & \text{in $(0,T)\times\Omega$}\\
\rho(0)=\rho_0
\end{cases}\end{equation}
if it is a solution of \eqref{trasportodiqua} in distributional sense, that is 
\begin{equation}\label{trasportodistirbuzionale}-\int_0^T\bigg(\int_\Omega \rho\  \phi_t \ dx\bigg) \ dt -\int_\Omega \rho_0(x)\phi(0,x)\ dx+\int_0^T\bigg(\int_\Omega \rho\  (u\cdot \nabla\phi) \ dx\bigg) \ dt=0\end{equation}
for every test function $\phi\in C^\infty([0,T]\times \Omega;\R)$ with compact support in $[0,T)\times\Omega$. This space can also be denoted by $\mathcal{D}([0,T)\times\Omega)$ or $C_c^\infty([0,T)\times\Omega;\R)$.
\end{definition}
We first prove an existence theorem.
\begin{theorem}[Existence of weak solutions] \label{exsolweak5} Let $p\in(1,\infty]$, $\rho_0\in L^p(\Omega)$. Let $q$ be the conjugate exponent of $p$. Suppose that $u\in L^1(0,T;W_0^{1,q}(\Omega))$, with $\nabla \cdot  u=0$. Then, there exists a solution of \eqref{trasportodiqua} in $L^\infty(0,T;L^p(\Omega))$ with initial density $\rho_0$.
\end{theorem}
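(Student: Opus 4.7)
The plan is a regularization plus weak-$*$ compactness argument: approximate the data $(u,\rho_0)$ by smooth data to which the classical Theorem \ref{maintransportequation} applies, use the $L^p$-conservation of the classical solutions to obtain a uniform bound, extract a weak-$*$ subsequential limit, and pass to the limit in the distributional identity \eqref{trasportodistirbuzionale}.

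First I would pick $\rho_0^n\in C^1(\overline\Omega)$ converging to $\rho_0$ strongly in $L^p(\Omega)$ for $p<\infty$, or in the weak-$*$ sense with uniform $L^\infty$-bound for $p=\infty$, via a standard mollification. Then I would approximate $u$ by a sequence $u_n\in C([0,T];C^1(\overline\Omega)^N)$ with $\nabla\cdot u_n=0$ in $\Omega$ and $u_n=0$ on $\partial\Omega$, such that $u_n\to u$ in $L^1(0,T;W_0^{1,q}(\Omega))$. This step is the main technical point of the plan: it rests on the classical density of smooth, compactly supported, solenoidal vector fields in $W_{0,\sigma}^{1,q}(\Omega)$ (for a Lipschitz domain), combined with a time-mollification or tensor-product argument; the simultaneous preservation of the divergence-free condition and the zero boundary trace is exactly what makes it non-routine. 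Applying Theorem \ref{maintransportequation} to $(u_n,\rho_0^n)$ produces classical solutions $\rho_n\in C^1([0,T]\times\overline\Omega)$ which, in particular, satisfy \eqref{trasportodistirbuzionale} with the regularized data.

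Part (ii) of the classical theorem yields $\|\rho_n(t)\|_p=\|\rho_0^n\|_p$ for every $t\in[0,T]$, so $(\rho_n)$ is uniformly bounded in $L^\infty(0,T;L^p(\Omega))$. Since $p\in(1,\infty]$, this space is the dual of the separable space $L^1(0,T;L^q(\Omega))$, so Banach--Alaoglu produces a subsequence (not relabeled) with $\rho_n\overset{*}{\rightharpoonup}\rho$ in $L^\infty(0,T;L^p(\Omega))$ for some limit $\rho$.

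Finally, I would pass to the limit $n\to\infty$ in the weak formulation satisfied by each $\rho_n$. The terms $\int_0^T\!\!\int_\Omega \rho_n\,\phi_t\,dx\,dt$ and $\int_\Omega\rho_0^n\,\phi(0,x)\,dx$ are linear and converge respectively by the weak-$*$ convergence of $\rho_n$ (tested against $\phi_t\in L^1(0,T;L^q)$) and by the $L^p$-convergence of $\rho_0^n$. The only delicate contribution is the transport term, which I would decompose as
\[
\int_0^T\!\!\int_\Omega\rho_n u_n\cdot\nabla\phi\,dx\,dt - \int_0^T\!\!\int_\Omega\rho\, u\cdot\nabla\phi\,dx\,dt = I_n + J_n,
\]
with $I_n:=\int_0^T\!\!\int_\Omega\rho_n(u_n-u)\cdot\nabla\phi\,dx\,dt$ and $J_n:=\int_0^T\!\!\int_\Omega(\rho_n-\rho)\, u\cdot\nabla\phi\,dx\,dt$. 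Then H\"older gives $|I_n|\leq\|\nabla\phi\|_\infty\,\sup_n\|\rho_n\|_{L^\infty(L^p)}\,\|u_n-u\|_{L^1(L^q)}\to 0$ by the strong convergence of $u_n$, while $J_n\to 0$ because $u\cdot\nabla\phi\in L^1(0,T;L^q(\Omega))$ is an admissible test for the weak-$*$ convergence of $\rho_n$. This closes the limit passage and shows that $\rho$ satisfies \eqref{trasportodistirbuzionale}; the hard part remains the construction of the solenoidal, boundary-vanishing, smooth approximation $u_n$, while the remaining compactness and convergence steps are standard.
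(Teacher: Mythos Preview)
Your proposal is correct and follows the same overall strategy as the paper: regularize $(u,\rho_0)$, apply the classical Theorem~\ref{maintransportequation}, use the $L^p$-conservation to get a uniform bound, extract a weak-$*$ limit in $L^\infty(0,T;L^p)\simeq(L^1(0,T;L^q))^*$, and pass to the limit in \eqref{trasportodistirbuzionale} via the same $I_n+J_n$ splitting of the transport term.

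The one substantive difference is in the construction of the smooth solenoidal approximation, which you correctly identify as the delicate point. You invoke the (standard but nontrivial) density of $C_{c,\sigma}^\infty(\Omega)$ in $W_{0,\sigma}^{1,q}(\Omega)$ as a black box and run a single limit $n\to\infty$. The paper instead makes this step explicit via a two-parameter scheme: first approximate $u$ in time only by $u^n\in C_c^\infty(0,T;W_{0,\sigma}^{1,q}(\Omega))$, extend by zero outside $\Omega$, then spatially mollify on enlarged domains $\Omega_m\supset\Omega$ to obtain $u^{m,n}\in C([0,T];C^1(\overline{\Omega}_m))$ which is divergence-free and vanishes on $\partial\Omega_m$; the classical theorem is applied on $\Omega_m$, and one passes first $m\to\infty$ then $n\to\infty$. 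Your route is shorter and avoids the auxiliary domains, at the cost of importing a density theorem; the paper's route is more self-contained and shows concretely how the divergence-free and boundary constraints survive mollification.
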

%IMMAGINE
\begin{figure}[tb]
\centering 
\includegraphics[width=0.25\columnwidth]{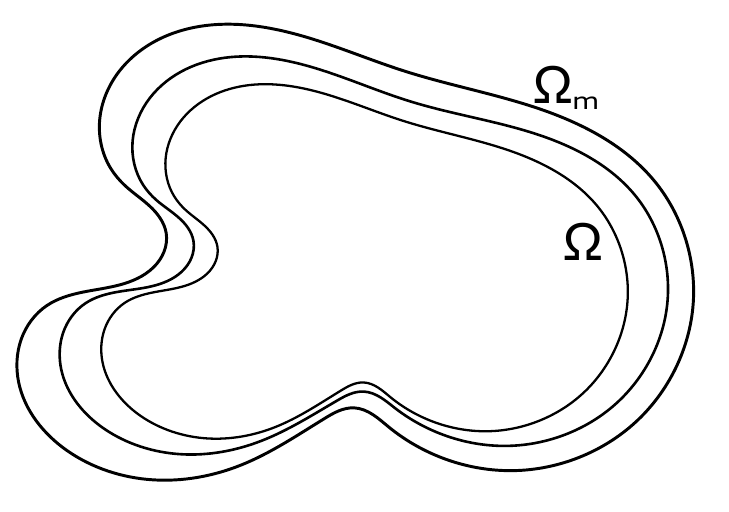} 
\caption[An example of a floating figure]{The domains $\Omega_m$ approach $\Omega$ from the outside.} % The text in the square bracket is the caption for the list of figures while the text in the curly brackets is the figure caption
\label{fig:gallery} 
\end{figure}
\begin{proof}
The proof that we present here is based over a classical regularization argument. Using the density of test function in Lebesgue spaces we can find a sequence $u^n\in C_c^\infty(0,T;W_{0,\sigma}^{1,q}(\Omega))$ such that 
\begin{equation}\label{convfortspbc1q0dv}\lim_{n\to\infty}\|u-u^n\|_{L^1(0,T;W_{0,\sigma}^{1,q}(\Omega))}=0\end{equation}
Since $u^n(t)\in W_0^{1,q}(\Omega)$, each element of the sequence can be extended to be zero outside $\Omega$. Moreover the initial density $\rho_0$ can be approached in $L^p(\Omega)$ with a sequence $\rho_n^0\in C_c^\infty(\Omega)$. We now set
\begin{equation} A_m:=\{x\in \Omega^c: \ \text{dist}(x,\partial\Omega)>\frac1m\}, \qquad \Omega_m:=A_m^c\end{equation}
See figure~\vref{fig:gallery}. % The \vref command specifies the location of the reference 
We define 
\begin{equation}u^{m,n}(x,t):=\int_{\Omega_m} \eta_m(x-y) u^n(y,t) \ dy\end{equation}
For every $t\in [0,T]$ fixed, this convolution is smooth in $x\in\Omega_m$. Moreover, it is continuous as a function of two variables. 
%In fact, if $(x_0,t_0)\in \Omega_m\times [0,T]$ we have that 
%$$|u^{m,n}(x,t)-u^{m,n}(x_0,t_0)|\leq \bigg|\int_{\Omega_m}\big(\eta_m(x-y)-\eta_m(x_0-y)\big) \ u^n(t,y) \ dy\bigg|+\bigg|\int_{\Omega_m}\eta_m(x_0-y)\big(u^n(t,y)-u^n(t_0,y)\big)\ dy\bigg|\leq$$
%$$\leq \bigg|\int_{\Omega_m}\big(\eta_m(x-y)-\eta_m(x_0-y)\big) \ u^n(t,y) \ dy\bigg|+\|\eta_m(x_0-\cdot)\|_{p,\Omega_m}\|u^n(t,\cdot)-u^n(t_0,\cdot)\|_{q,\Omega_m}$$
%Since $u^n\in C^\infty([0,T];W_{0,\sigma}^{1,q}(\Omega))$, we can find $\delta_1>0$ such that $\|u^n(t,\cdot)-u^n(t_0,\cdot)\|_q<\frac{\varepsilon}2$ if $|t-t_1|<\delta_1$. On the other hand, since $\eta_m(r)$ is uniformly continuos on $\R$, there exists $\delta_2>0$ such that 
%$$|x-x_0|=|(x-y)-(x_0-y)|<\delta_2\ \Longrightarrow \ |\eta_m(x-y)-\eta_m(x_0-y)|<\frac{\varepsilon}2$$
%It follows that, if $|x-x_0|<\delta_2$ and $|t-t_0|<\delta_1$,
%$$|u^{m,n}(x,t)-u^{m,n}(x_0,t_0)|\leq \frac{\varepsilon}2\|u^n(t,\cdot)\|_{1,\Omega_m}+\frac{\varepsilon}2\|\eta_m(x_0-\cdot)\|_{p,\Omega_m}\leq$$
%\begin{equation} \leq\frac{\varepsilon}2\bigg(\max_{t\in[0,T]}\|u^n(t,\cdot)\|_{1,\Omega_m}+\|\eta_m(x_0-\cdot)\|_{p,\Omega_m}\bigg)\end{equation}
%where $m$ and $n$ are fixed. 
Thanks to the convolution properties, the $x$-derivative is continuous over $\overline\Omega_m$: in particular, the gradient $\nabla u^{m,n}$ has the same integral form of $u^{m,n}$. So $u^{m,n}\in C([0,T];C^1(\overline\Omega_m))$. We have, furthermore, a gradient estimate:
\begin{equation}\label{stimacontrolloconvoluzione}|\nabla u^{m,n}(x,t)|=\bigg|\int_{\Omega_m}\eta_m(x-y) \nabla u^n(y,t)\ dy\bigg|\leq \bigg(\int_{\Omega_m}|\eta_m(x-y)|^p\ dy\bigg)^\frac1p\|\nabla u^n(\cdot, t)\|_q\leq\end{equation}
$$\leq\bigg(\int_{\R^N}|\eta_m(x-y)|^p\ dy\bigg)^\frac1p\max_{t\in[0,T]}\|\nabla u^n(\cdot, t)\|_q\equiv \bigg(\int_{\R^N}|\eta_m(z)|^p\ dz\bigg)^\frac1p\max_{t\in[0,T]}\|\nabla u^n(\cdot, t)\|_q$$
so that 
$$\sup_{t\in [0,T]}\|\nabla u^{m,n}(t)\|_\infty\leq \bigg(\int_{\R^N}|\eta_m(z)|^p\ dz\bigg)^\frac1p\max_{t\in[0,T]}\|\nabla u^n(\cdot, t)\|_q$$
We observe two properties of this approximation: 
\begin{enumerate}[label=(\roman*)]
\item If $x\in\partial \Omega_m$, we have $\displaystyle u^{m,n}(x,t)=\int_{\Omega_m}\eta_m(x-y)u^n(y,t)\ dy=0$
since $u^n(y,t)=0$ if $y\in B(x,\frac1m)$;
\item Moreover, 
\begin{equation}\nabla \cdot u^{m,n}(x,t)=\int_{\Omega_m}\eta_m(x-y)\nabla\cdot u^n(y,t)\ dy=0\end{equation}
since $\nabla \cdot u^n(y,t)=0$ by the definition of $u^n$. 
\end{enumerate}
\begin{figure}[tb]
\centering 
\includegraphics[width=0.4\columnwidth]{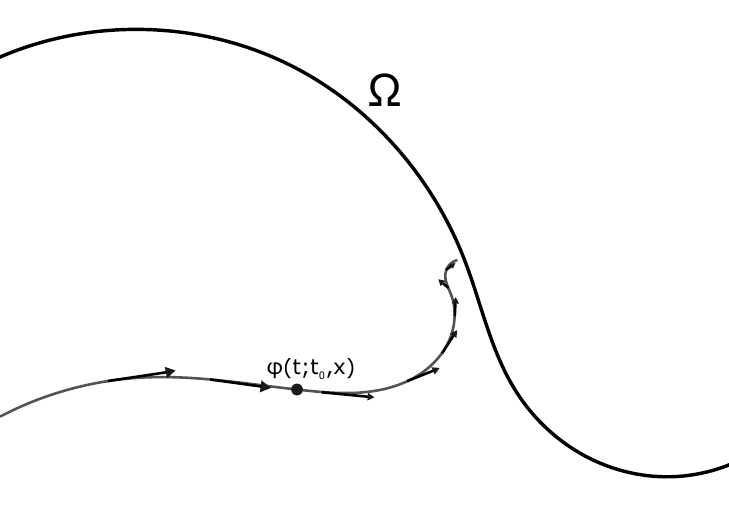} 
\caption[An example of a floating figure]{The regularized velocity fields have equilibrium points at the boundary, and thus the trajectories with starting point at the boundary remain constantly on this point for every time, while the interior trajectories, in example the flow $\varphi(t;t_0,x)$, never reach the boundary.} % The text in the square bracket is the caption for the list of figures while the text in the curly brackets is the figure caption
\label{fig:boundary} 
\end{figure}
So, we can use this velocity field to solve the transport problem 
$$\begin{cases}\rho_t-u^{m,n}\cdot \nabla \rho=0 & \text{in $[0,T]\times\overline\Omega_m$}\\
\rho(0,x)=\rho_0^n 
\end{cases}$$
\begin{remark} The zero boundary condition and the divergence-free condition are fundamental to apply Theorem \ref{maintransportequation}. As we will see in a moment, we need these conditions to avoid that the (regular) characteristic curves escape the domain $\Omega$, as we send them to limit in order to obtain weak solutions. See figure ~\vref{fig:boundary}.
\end{remark}
We name $\rho^{m,n}$ the solution of this classical transport equation. We know, according to the classical theory presented above, that, eventually renaming the sequence, 
$$\|\rho^{m,n}(t)\|_p=\|\rho_0^n\|_p\leq \|\rho_0\|_p+1=: C_0$$
It follows that $\|\rho^{m,n}\|_{L^\infty(0,T;L^p(\Omega))}\leq C_0$. Observe that, since $p\in (1,\infty]$, $L^\infty(0,T;L^p(\Omega))\simeq(L^1(0,T;L^q(\Omega))^*$, where $q$ is such that $\frac1p+\frac1q=1$. Moreover $L^q(\Omega)$ is separable, since $q\in [1,\infty)$. So, $L^1(0,T;L^q(\Omega))$ is separable. Then, thanks to the sequential version of Hanh-Banach theorem, we have that exists a weak-star converging subsequence, that approaches to some $\rho^n\in L^\infty(0,T;L^p(\Omega))$, that is 
\begin{equation}\label{weakstarconvergencerho37}\rho^{m_k,n}\stackrel{*}{\rightharpoonup} \rho^n \end{equation}
in $L^\infty(0,T;L^p(\Omega))\simeq(L^1(0,T;L^q(\Omega))^*$. In particular, the sequence satisfies
\begin{equation}\label{weaktranspformapprox1}-\int_\Omega(\rho^{m_k,n}\varphi)(0)\ dx-\int_0^{T}\int_\Omega\rho^{m_k,n} \varphi_t\ dx \ dt=\int_0^{T}\int_\Omega\rho^{m_k,n} u^{m_k,n}\cdot \nabla \varphi\ dx\ dt\end{equation}
for every $\varphi\in C_c^\infty(\Omega\times [0,T);\R)$, since $\rho^{m,n}$ is a classical solution and $u^{m,n}$ has free-boundary condition. Our aim is to pass to the limit the equation \eqref{weaktranspformapprox1}.  Observe, first of all, that 
\begin{equation}\int_\Omega \big(\rho^{m_k,n}\varphi\big)(0) \ dx\equiv \int_\Omega \rho_0^n(x)\varphi(x,0) \ dx,\qquad \int_0^T\int_\Omega \rho^{m_k,n}\varphi_t \ dx \ dt\stackrel{k\to\infty}{\longrightarrow} \int_0^T \int_\Omega \rho^n \varphi_t \ dx\ dt\end{equation}
thanks to the weak-$*$ convergence \eqref{weakstarconvergencerho37}. Furthermore 
$$\bigg|\int_0^{T}\int_\Omega\rho^{m_k,n} u^{m_k,n}\cdot \nabla \varphi\ dx\ dt-\int_0^{T}\int_\Omega\rho^{n} u^{n}\cdot \nabla \varphi\ dx\ dt\bigg|\leq$$
$$\leq \bigg|\int_0^T\int_\Omega (\rho^{m_k,n}-\rho^{n})u^{n}\cdot \nabla \varphi \ dx \ dt\bigg| + C\bigg(\int_0^T\|\rho^{m_k,n}\|_p\|u^n-u^{m_k,n}\|_q \ dt\bigg)\leq $$
$$\leq \bigg|\int_0^T\int_\Omega (\rho^{m_k,n}-\rho^{n})u^{n}\cdot \nabla \varphi \ dx \ dt\bigg| + C\bigg(\sup_{(0,T)}\|\rho^{m_k,n}\|_p\bigg)\bigg(\int_0^T\|u^n-u^{m_k,n}\|_q \ dt\bigg)\leq$$
\begin{equation}\leq \bigg|\int_0^T\int_\Omega (\rho^{m_k,n}-\rho^{n})u^{n}\cdot \nabla \varphi \ dx \ dt\bigg| + CC_0\|u^n-u^{m_k,n}\|_{L^1(0,T;L^q(\Omega)}\end{equation}
where $C$ is an upper bound for the derivative of the test function $\nabla\varphi$. Observe now that $\|u^n-u^{m_k,n}\|_{L^1(0,T;L^q(\Omega))}\to0$ as $k\to\infty$, thanks to \eqref{convfortspbc1q0dv}, and, moreover $\displaystyle \int_0^T\|u^n\cdot \nabla \varphi\|_q \ dt\leq C\int_0^T\|u^n\|_q\ dt<\infty$
that is, $u^n\cdot \nabla \varphi\in L^1(0,T;L^q(\Omega))$ and so the weak star convergence of $\rho^{m_k,n}$ implies that 
\begin{equation}\int_0^T\int_\Omega (\rho^{m_k,n}-\rho^{n})u^{n}\cdot \nabla \varphi \ dx \ dt\stackrel{k\to\infty}{\longrightarrow} 0\end{equation}
It follows that equation \eqref{weaktranspformapprox1}, sent to the limit, becomes
\begin{equation}\label{limitsendrhon1a}-\int_\Omega\rho_0^{n}(x)\varphi(x,0)\ dx-\int_0^{T}\int_\Omega\rho^{n} \varphi_t\ dx \ dt=\int_0^{T}\int_\Omega\rho^{n} u^{n}\cdot \nabla \varphi\ dx\ dt\end{equation}
Moreover, by the weak-$*$ convergence property, we have 
\begin{equation}\label{boundrhonc0}\|\rho^n\|_{L^\infty(0,T;L^p(\Omega))}\leq \liminf_{k\to\infty}\|\rho^{m_k,n}\|_{L^\infty(0,T;L^p(\Omega))}\leq C_0\end{equation}
We now want to send $n\to\infty$ in \eqref{limitsendrhon1a}. Clearly, if $C$ is an upper bound of $\varphi$,
\begin{equation}\bigg|\int_\Omega (\rho_0^n(x)-\rho^0(x))\varphi(x,0) \ dx\bigg|\leq C\|\rho_0^n-\rho^0\|_p\to0\end{equation}
By the bound \eqref{boundrhonc0}, we have that there exists a subsequence $n_h$ and $\rho \in L^\infty(0,T;L^p(\Omega))$ such that, as $h\to\infty$,
\begin{equation}\rho^{n_h}\stackrel{*}{\rightharpoonup} \rho \end{equation}
It follows that 
$$\bigg|\int_0^{T}\int_\Omega\rho^{n_h} u^{n_h}\cdot \nabla \varphi\ dx\ dt-\int_0^{T}\int_\Omega\rho u\cdot \nabla \varphi\ dx\ dt\bigg|=\bigg|\int_0^T\int_\Omega (\rho -\rho^{n_h})u\cdot \nabla \varphi \ dx \ dt -\int_0^T\int_\Omega \rho^{n_h} (u^{n_h}-u)\cdot \nabla \varphi \ dx \ dt\bigg|\leq$$
$$\leq \bigg|\int_0^T\int_\Omega (\rho -\rho^{n_h})u\cdot \nabla \varphi \ dx \ dt\bigg|+C\int_0^T\|\rho^{n_h}\|_p\|u^{n_h}-u\|_q \ dt\leq \bigg|\int_0^T\int_\Omega (\rho -\rho^{n_h})u\cdot \nabla \varphi \ dx \ dt\bigg|+$$
\begin{equation}+C\bigg(\sup_{(0,T)}\|\rho^{n_h}\|_p\bigg)\int_0^T\|u^{n_h}-u\|_q \ dt\leq \bigg|\int_0^T\int_\Omega (\rho -\rho^{n_h})u\cdot \nabla \varphi \ dx \ dt\bigg|+CC_0\|u^{n_h}-u\|_{L^1(0,T;L^q(\Omega))}\end{equation}
Since $u\cdot \nabla \varphi \in L^1(0,T;L^q(\Omega))$ and since $\rho^{n_h}$ converges weakly star to $\rho$, we have
\begin{equation}\int_0^T\int_\Omega (\rho -\rho^{n_h})u\cdot \nabla \varphi \ dx \ dt\stackrel{k\to\infty}{\longrightarrow} 0\end{equation}
It follows that 
\begin{equation}-\int_\Omega\rho^0(x)\varphi(x,0)\ dx-\int_0^{T}\int_\Omega\rho \varphi_t\ dx \ dt=\int_0^{T}\int_\Omega\rho u\cdot \nabla \varphi\ dx\ dt\end{equation}
So we have found $\rho\in L^\infty(0,T;L^p(\Omega))$ such that it is a weak solution to the trasport equation with velocity $u$ and initial density $\rho^0$. This proves the theorem.
\end{proof}
\subsection{Weak solutions and convolutions}
We now observe that, under suitable hypothesis on $u$, weak solution to equation \eqref{trasportodiqua} can be approached by smooth (in space) solution of \eqref{trasportodiqua}, plus an error term. In particular, we have the following theorem. 
\begin{theorem}[Convolution in $x$ of weak solutions]\label{approxregularixazione} Let $\Omega$ be a bounded domain. Consider $p\in(1,\infty]$, and $\rho\in L^\infty(0,T;L^p(\Omega))$ a solution of \eqref{trasportodiqua} with initial density $\rho_0\in L^p(\Omega)$ and assume that $u\in L^1(0,T;W^{1,\alpha}(\Omega))$ for some $\alpha\geq q$, $\nabla \cdot u=0$, where $q$ is the conjugate exponent of $p$. Let $\eta_\varepsilon=\eta_\varepsilon(x)$ be a regularizing kernel over $\Omega$. In particular, for every $\varepsilon>0$, if $\Omega_\varepsilon:=\{x\in\Omega: \ \text{dist}(x,\partial\Omega)>\varepsilon\}$, we define 
\begin{equation}\eta_\varepsilon(x):=\frac{1}{\varepsilon^{n}}\eta\bigg(\frac{x}{\varepsilon}\bigg)\end{equation}
with  $C_c^\infty(\R^{n};\R)\ni\eta\geq0$, $\text{supp}(\eta)\subset B(0,1)$. Let $\rho_\varepsilon(x,t):=\big(\rho(\cdot,t)*\eta_\varepsilon\big)(x,t)$. Let $\phi\in C_c^\infty([0,T)\times\Omega;\R)$ and suppose $\phi(x,\cdot)=0$ for every $x\in \Omega_0^c$, with $\Omega_0$ a compact subset of $\Omega$. Then, if $\varepsilon<\text{dist}(\Omega_0,\partial\Omega)$,
\begin{equation}\label{weakformaaproxeps96}-\int_0^T\bigg(\int_{\Omega_\varepsilon} \rho_\varepsilon \frac{\partial \phi}{\partial t} \ dx\bigg) \ dt-\int_{\Omega_\varepsilon} \rho_\varepsilon^0 \ \phi(0,x) \ dx +\int_0^T\bigg(\int_{\Omega_\varepsilon} \rho_\varepsilon u\cdot \nabla \phi \ dx\bigg) \ dt=\int_0^T\bigg(\int_{\Omega} r_\varepsilon\phi \ dx\bigg)\ dt
\end{equation}
where 
\begin{equation}\label{restoapprossimazione} r_\varepsilon(x,t):=\int_\Omega \rho(y,t)(u(y,t)-u(x,t))\cdot \nabla \eta_\varepsilon(y-x) \ dy, \qquad \rho_\varepsilon^0(x):=(\rho_0*\eta_\varepsilon)(x)\end{equation}
Moreover, for every $\Omega_0\subseteq\Omega$, $r_\varepsilon$ goes to zero in $L^1(0,T;L^\gamma(\Omega_0))$ when $\varepsilon\to0$,  
where $\gamma$ is such that 
\begin{equation}\frac{1}{\gamma}=\frac{1}{\alpha}+\frac1p\end{equation}
\end{theorem}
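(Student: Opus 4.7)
The plan is to derive the identity \eqref{weakformaaproxeps96} by testing the distributional equation for $\rho$ against a spatial mollification of $\phi$, and then to prove $r_\varepsilon\to 0$ in $L^1(0,T;L^\gamma(\Omega_0))$ by the DiPerna--Lions commutator estimate: a uniform bound plus pointwise convergence for smooth $u$, extended to $u\in W^{1,\alpha}$ by density. For $\phi$ as in the statement and $\varepsilon<\text{dist}(\Omega_0,\partial\Omega)$, the function $\psi(t,y):=\int_\Omega\phi(t,x)\eta_\varepsilon(x-y)\,dx$ has its $y$-support in $\Omega_0+B(0,\varepsilon)\Subset\Omega$ and is therefore admissible in \eqref{trasportodistirbuzionale}. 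Plugging $\psi$ in and swapping integrations by Fubini, the first two terms become $\int\rho_\varepsilon\phi_t$ and $\int\rho_\varepsilon^0\phi(0,\cdot)$. The transport term I would rewrite using $u(y,t)=u(x,t)+(u(y,t)-u(x,t))$: the $u(x,t)$--part, after integration by parts in $x$ (justified since $\phi$ is compactly supported and $\nabla\cdot u=0$), gives $\int\rho_\varepsilon(u\cdot\nabla\phi)$, while the $(u(y,t)-u(x,t))$--part, once the $y$--derivative on $\eta_\varepsilon(x-y)$ is exchanged for an $x$--derivative via the evenness of $\eta$, reconstructs $r_\varepsilon$. Collecting the pieces yields \eqref{weakformaaproxeps96}.

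\emph{Commutator estimate.} The change of variable $z=(y-x)/\varepsilon$ absorbs the $\varepsilon^{-1}$ inside $\nabla\eta_\varepsilon$ against the factor $\varepsilon$ in $y-x=\varepsilon z$, giving
$$r_\varepsilon(x,t)=\int_{B(0,1)}\rho(x+\varepsilon z,t)\,\frac{u(x+\varepsilon z,t)-u(x,t)}{\varepsilon}\cdot\nabla\eta(z)\,dz.$$
For smooth $u$ the divided difference equals $\int_0^1 z\cdot\nabla u(x+s\varepsilon z,t)\,ds$; the same representation is valid in $L^\alpha$ for general $u\in W^{1,\alpha}$ by mollification. Minkowski's integral inequality in $(s,z)$, Hölder with exponents $p,\alpha$ (so that $1/\gamma=1/p+1/\alpha$), and the translation invariance of the $L^p,L^\alpha$ norms on $\Omega_0$ (permissible because $|s\varepsilon z|<\varepsilon<\text{dist}(\Omega_0,\partial\Omega)$ keeps the translated arguments inside $\Omega$) yield the uniform bound $\|r_\varepsilon(t)\|_{L^\gamma(\Omega_0)}\le C(\eta)\|\rho(t)\|_{L^p(\Omega)}\|\nabla u(t)\|_{L^\alpha(\Omega)}$, which after integration in $t$ gives uniform boundedness in $L^1(0,T;L^\gamma(\Omega_0))$.

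\emph{Convergence and density.} For smooth $u$ the integrand converges pointwise to $\rho(x,t)(z\cdot\nabla u(x,t))\cdot\nabla\eta(z)$, dominated uniformly in $\varepsilon$ by the bound above; the identity $\int z_j\partial_i\eta(z)\,dz=-\delta_{ij}$ (integration by parts with $\int\eta=1$) shows the pointwise limit integrates to $-\rho\,\nabla\cdot u=0$, so dominated convergence gives $r_\varepsilon\to 0$. For general $u$ choose smooth $u^k\to u$ in $L^1(0,T;W^{1,\alpha}(\Omega))$; applied to $u-u^k$ the commutator estimate gives $\|r_\varepsilon[u]-r_\varepsilon[u^k]\|_{L^1(0,T;L^\gamma(\Omega_0))}\le C\|\rho\|_{L^\infty L^p}\|u-u^k\|_{L^1 W^{1,\alpha}}$ uniformly in $\varepsilon$, and the standard two-parameter argument (take $k$ large first, then let $\varepsilon\to 0$ for the fixed $u^k$) concludes. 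The main obstacle is the uniform commutator bound: exploiting the cancellation of $\varepsilon^{-1}$ via the Taylor representation and then distributing the norm through Minkowski/Hölder so that no $\varepsilon$-dependent constant survives is the real content of the DiPerna--Lions lemma in this bounded-domain setting.
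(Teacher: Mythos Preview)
Your proposal is correct and follows essentially the same route as the paper: test the weak formulation against the mollified test function $\psi(t,y)=\int_\Omega\phi(t,x)\eta_\varepsilon(x-y)\,dx$, use Fubini and the evenness of $\eta$ to identify the three terms on the left of \eqref{weakformaaproxeps96}, and isolate the commutator $r_\varepsilon$ as the remainder. For the convergence $r_\varepsilon\to 0$ in $L^1(0,T;L^\gamma(\Omega_0))$ the paper simply invokes the DiPerna--Lions argument from the cited reference, while you spell it out in full (change of variable $z=(y-x)/\varepsilon$, integral Taylor representation, Minkowski--H\"older for the uniform bound, the identity $\int z_j\partial_i\eta=-\delta_{ij}$ for the pointwise limit, and a density argument); this is exactly the content of that reference, so there is no real difference in approach.
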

\begin{remark}
The convergence to zero of $r_\varepsilon$ in $L^1(0,T;L^\gamma(\Omega_0))$ assures that
$$\bigg|\int_0^T \bigg(\int_\Omega r_\varepsilon \phi \ dx\bigg) \ dt\bigg|=\bigg|\int_0^T \bigg(\int_{\Omega_0} r_\varepsilon \phi \ dx\bigg) \ dt\bigg|\leq |\Omega|^{\frac{\gamma-1}{\gamma}}\bigg(\sup_{[0,T]\times\Omega}|\phi|\bigg)\int_0^T\|r_\varepsilon\|_{L^\gamma(\Omega_0)} \ dt\to0$$
for $\varepsilon\to 0$. 
\end{remark}
\begin{proof} 
The proof that the term \eqref{restoapprossimazione} goes to zero in the suitable norm as $\varepsilon\to0$ is based on the same arguments in \cite{Figueredo:2009dg}, with analogous calculations. We only remark that 
$$\int_0^T\bigg(\int_{\Omega_\varepsilon} \rho_\varepsilon(x,t)\frac{\partial\phi}{\partial t}(x,t)\ dx\bigg)\ dt=\int_0^T\bigg\{\int_{\Omega_\varepsilon} \bigg(\int_\Omega \rho(y,t) \eta_\varepsilon(x-y) \ dy\bigg)\frac{\partial\phi}{\partial t}(x,t)\ dx\bigg\}\ dt=$$
$$=\int_0^T\bigg\{\int_\Omega \bigg(\int_{\Omega_\varepsilon} \eta_\varepsilon(x-y) \frac{\partial \phi}{\partial t}(x,t) \ dx\bigg)\ \rho(y,t) \ dy\bigg\}\ dt=\int_0^T\bigg\{\int_\Omega \frac{\partial}{\partial t}\phi_\varepsilon(y,t) \ \rho(y,t) \ dy\bigg\}\ dt$$
since $\eta_\varepsilon(x-y)=\eta_\varepsilon(y-x)$ by definition, and, being $\varepsilon<\text{dist}(\Omega_0,\partial\Omega)$, we have $\phi(x,t)\equiv0$ in $\Omega/\Omega_\varepsilon$, so that 
$$\int_{\Omega_\varepsilon} \eta_\varepsilon(x-y) \frac{\partial \phi}{\partial t}(x,t) \ dx=\int_{\Omega}\eta_\varepsilon(x-y) \frac{\partial \phi}{\partial t}(x,t) \ dx=:\frac{\partial}{\partial t}\phi_\varepsilon(y,t)$$
In the same way, we have 
$$\int_{\Omega_\varepsilon} \rho_\varepsilon^0(x)\phi(0,x) \ dx=\int_\Omega \phi_\varepsilon(0,y) \rho^0(y) \ dy$$
Analogously  
$$\int_0^T\bigg(\int_{\Omega_\varepsilon} \rho_\varepsilon(x,t) u(x,t)\cdot \nabla \phi(x,t) \ dx\bigg) \ dt=\int_0^T\bigg\{\int_\Omega \rho(y,t) \ \bigg(\int_{\Omega_\varepsilon}\eta_\varepsilon(x-y) u(x,t)\cdot \nabla \phi(x,t) \ dx\bigg) \ dy\bigg\} \ dt=$$
and being $\phi(x,t)\equiv0$ on $\Omega_\varepsilon^c$, and since $\nabla \eta_\varepsilon(x-y)=-\nabla \eta_\varepsilon(y-x)$, and $\nabla\cdot u=0$,
$$=\int_0^T\bigg\{\int_\Omega \rho(y,t)\bigg(\int_\Omega \eta_\varepsilon(x-y) u(x,t)\cdot \nabla \phi(x,t) \ dx \bigg) \ dy\bigg\}\ dt=\int_0^T\bigg\{\rho(y,t)\bigg(\int_\Omega \phi(x,t) u(x,t)\cdot\nabla \eta_\varepsilon(y-x) \ dx\bigg)\ dy\bigg\} \ dt$$
\end{proof}
The riformulation obtained in theorem \ref{approxregularixazione} allows to prove important results concerning the weak transport equation. In particular, we now prove a theorem that paves the way to the proof of the uniqueness of the (weak) solution. It also introduces the concept of renormalized solution. 
\begin{theorem}\label{lemmaapproxbeta} Let $\Omega$ be a bounded domain. Fix $p\in(1,\infty]$, and consider $\rho\in L^\infty(0,T;L^p(\Omega))$, a solution of  \eqref{trasportodiqua} with initial density $\rho_0\in L^p(\Omega)$ and assume that $u\in L^1(0,T;W^{1,\alpha}(\Omega))$ for some $\alpha\geq q$, $\nabla \cdot u=0$. Let $\eta_\varepsilon=\eta_\varepsilon(x)$ be a regularizing kernel over $\Omega$. In particular, if $\Omega_\varepsilon:=\{x\in\Omega: \ \text{dist}(x,\partial\Omega)>\varepsilon\}$, define 
$$\eta_\varepsilon(x):=\frac{1}{\varepsilon^{n}}\eta\bigg(\frac{x}{\varepsilon}\bigg)$$
with $C_c^\infty(\R^{n})\ni\eta\geq0$, $\text{supp}(\eta)\subset B(0,1)$. Let  $\rho_\varepsilon(x,t):=\big(\rho(\cdot,t)*\eta_\varepsilon\big)(x,t)$. Let $\phi\in C_c^\infty([0,T)\times\Omega)$ and suppose that $\phi(x,\cdot)=0$ for every $x\in \Omega_0^c$, with $\Omega_0$ compact set. Let $\beta\in C^1(\R)$ a function, with $\beta,\beta'$ bounded.
%, and such that $\beta$ vanishes near the origin
Then, if $\varepsilon<\text{dist}(\Omega_0,\partial\Omega)$, equation \eqref{weakformaaproxeps96} holds, and
$$-\int_0^T\bigg(\int_{\Omega_\varepsilon} \beta(\rho_\varepsilon) \frac{\partial \phi}{\partial t} \ dx\bigg) \ dt-\int_{\Omega_\varepsilon} \beta(\rho_\varepsilon^0) \ \phi(0,x) \ dx +\int_0^T\bigg(\int_{\Omega_\varepsilon} \beta(\rho_\varepsilon) u\cdot \nabla \phi \ dx\bigg) \ dt=$$
$$=\int_0^T\bigg(\int_{\Omega_\varepsilon} r_\varepsilon\beta'(\rho_\varepsilon)\phi \ dx\bigg)\ dt$$
where, as above,
$$r_\varepsilon(x,t)=\int_\Omega \rho(y,t)(u(y,t)-u(x,t))\cdot \nabla \eta_\varepsilon(y-x) \ dy$$
\end{theorem}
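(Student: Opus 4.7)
The natural plan is to deduce from equation \eqref{weakformaaproxeps96} a strong (pointwise a.e.) transport equation for the mollified density $\rho_\varepsilon$, apply the classical chain rule, and then re-integrate against $\phi$ to recover the renormalized weak form.

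First, I would invoke Theorem \ref{approxregularixazione}, so that \eqref{weakformaaproxeps96} holds. Since $\phi$ is supported in $[0,T)\times\Omega_0$ with $\Omega_0\Subset\Omega_\varepsilon$, and since $\rho_\varepsilon(\cdot,t)$ is $C^\infty$ in $x$ with $\nabla\rho_\varepsilon\in L^\infty_{\text{loc}}$ in space, I can use $\nabla\cdot u=0$ in the weak sense to integrate by parts the term $\int_{\Omega_\varepsilon}\rho_\varepsilon u\cdot\nabla\phi\,dx$ and rewrite it as $-\int_{\Omega_\varepsilon}\phi\,u\cdot\nabla\rho_\varepsilon\,dx$ (this is legitimate because $\phi\,\nabla\rho_\varepsilon\in W^{1,\infty}_c(\Omega_\varepsilon)$ can be used as a test vector field for the weak divergence-free condition). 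The identity \eqref{weakformaaproxeps96} then reads, for every admissible $\phi$,
\begin{equation*}
-\int_0^T\!\!\int_{\Omega_\varepsilon}\rho_\varepsilon\,\phi_t\,dx\,dt-\int_{\Omega_\varepsilon}\rho_\varepsilon^0\,\phi(0,\cdot)\,dx=\int_0^T\!\!\int_{\Omega_\varepsilon}\bigl(u\cdot\nabla\rho_\varepsilon+r_\varepsilon\bigr)\phi\,dx\,dt,
\end{equation*}
which is precisely the statement that, in the distributional sense on $(0,T)\times\Omega_\varepsilon$,
\begin{equation*}
\partial_t\rho_\varepsilon=u\cdot\nabla\rho_\varepsilon+r_\varepsilon,\qquad\rho_\varepsilon(0,\cdot)=\rho_\varepsilon^0.
\end{equation*}

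Next I would promote this to a pointwise equation. The right-hand side belongs to $L^1(0,T;L^\gamma_{\text{loc}}(\Omega_\varepsilon))$: indeed $r_\varepsilon$ does by Theorem \ref{approxregularixazione}, and $u\cdot\nabla\rho_\varepsilon$ does because $\nabla\rho_\varepsilon$ is uniformly bounded on any compact subset of $\Omega_\varepsilon$ (as a convolution of $L^p$ with the smooth kernel $\nabla\eta_\varepsilon$) while $u\in L^1(0,T;L^\alpha(\Omega))$ with $\alpha\ge q$. Consequently $\rho_\varepsilon$ admits a representative which is absolutely continuous in $t$ valued in $L^\gamma_{\text{loc}}(\Omega_\varepsilon)$, so that the above identity holds for a.e.\ $(x,t)$ in $\Omega_0\times(0,T)$, and $\rho_\varepsilon(t,\cdot)\to\rho_\varepsilon^0$ as $t\downarrow 0$.

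Once this is established, the classical chain rule for $\beta\in C^1(\R)$ with $\beta,\beta'$ bounded gives, a.e.\ on $(0,T)\times\Omega_0$,
\begin{equation*}
\partial_t\beta(\rho_\varepsilon)=\beta'(\rho_\varepsilon)\,\partial_t\rho_\varepsilon=\beta'(\rho_\varepsilon)\bigl(u\cdot\nabla\rho_\varepsilon+r_\varepsilon\bigr)=u\cdot\nabla\beta(\rho_\varepsilon)+\beta'(\rho_\varepsilon)\,r_\varepsilon,
\end{equation*}
where I used that $\beta'(\rho_\varepsilon)\,\nabla\rho_\varepsilon=\nabla\beta(\rho_\varepsilon)$ because $\rho_\varepsilon$ is smooth in $x$. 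Multiplying by $\phi$, integrating on $(0,T)\times\Omega_\varepsilon$, and integrating by parts in time (using $\beta(\rho_\varepsilon(0))=\beta(\rho_\varepsilon^0)$ and the fact that $\phi$ vanishes at $t=T$ and outside $\Omega_0$) yields the announced identity, provided we again move the $u$-derivative onto $\phi$ through $\nabla\cdot u=0$, which is legitimate for the same reason as before.

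\textbf{Main obstacle.} The technical heart of the argument is the second step: converting the distributional equation for $\rho_\varepsilon$ into a pointwise a.e.\ identity with an integrable time derivative, so that the chain rule applies. The delicate point is to certify enough regularity in time of $\rho_\varepsilon$ (continuity into $L^\gamma_{\text{loc}}$, matching of initial trace with $\rho_\varepsilon^0$) in spite of $u$ being only $L^1$ in time; everything else is bookkeeping with $\nabla\cdot u=0$ and the smoothness in $x$ coming from convolution.
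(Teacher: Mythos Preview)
Your proposal is correct and follows essentially the same route as the paper: derive from \eqref{weakformaaproxeps96} the pointwise-in-time equation $\partial_t\rho_\varepsilon=u\cdot\nabla\rho_\varepsilon+r_\varepsilon$, exploit the smoothness in $x$ of $\rho_\varepsilon$ together with the $L^1$-in-time integrability of the right-hand side to justify the chain rule, and then integrate back against $\phi$. The only presentational difference is that the paper carries out the ``promotion'' step by fixing a point $y\in\Omega_\varepsilon$, testing against a spatial Dirac sequence $\eta_{1/m}(y-\cdot)$, and concluding $\rho_\varepsilon(y,\cdot)\in W^{1,1}(0,T)$ with the explicit integral representation \eqref{abscont910a}, whereas you phrase the same regularity as absolute continuity of $t\mapsto\rho_\varepsilon(\cdot,t)$ into $L^\gamma_{\text{loc}}$; in this setting the two formulations are equivalent and the obstacle you flag is exactly the one the paper addresses.
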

\begin{remark} In this theorem, as in the previous one, we prove \textit{a posteriori} results about a solution that we already know that exists. So, we only require that $u\in L^1(0,T;W^{1,\alpha}(\Omega))$, without any request at the boundary. We know, however, that in order to assure the existence of a solution, it is required a zero boundary condition, by the first theorem proved. \end{remark}
%\begin{small}
\begin{proof} Consider \eqref{weakformaaproxeps96},
%that is, for every $\phi\in C_c^\infty([0,T)\times\Omega)$ such that $\phi(x,\cdot)=0$ for every $x\in \Omega_0^c$, with $\Omega_0$ compact, if $\varepsilon<\text{dist}(\Omega_0,\partial\Omega)$,
%\begin{equation}\label{euqationsendtxindelta1}-\int_0^T\bigg(\int_{\Omega_\varepsilon} \rho_\varepsilon \frac{\partial \phi}{\partial t} \ dx\bigg) \ dt-\int_{\Omega_\varepsilon} \rho_\varepsilon^0 \ \phi(0,x) \ dx +\int_0^T\bigg(\int_{\Omega_\varepsilon} \rho_\varepsilon u\cdot \nabla \phi \ dx\bigg) \ dt=\int_0^T\bigg(\int_{\Omega} r_\varepsilon\phi \ dx\bigg)\ dt\end{equation}
%We 
and choose now $\phi(x,t):=\varphi(x)\psi(t)$, with $\psi\in C_c^\infty(0,T)$ and $\varphi$ to be fixed. Then we have, using that $\rho_\varepsilon u\phi\equiv0$ on $\partial\Omega_\varepsilon$ and the divergence theorem,
$$-\int_0^T\bigg(\int_{\Omega_\varepsilon}\rho_\varepsilon(x) \psi'(t) \varphi(x) \ dx\bigg) \ dt=\int_0^T\psi(t)\bigg(\int_{\Omega_\varepsilon}(u\cdot \nabla \rho_\varepsilon+r_\varepsilon)(x) \ \varphi(x)\bigg) \ dt$$
If we choose  $\varphi$ as the unitary mass sequence $\varphi_y^m(x):=\eta_{\frac1m}(y-x)$, concentrated in $y$, it follows 
%that 
%\begin{equation}-\int_0^T\rho_\varepsilon(y) \ \psi'(t) \ dt=\int_0^T\psi(t) (u\cdot \nabla \rho_\varepsilon+r_\varepsilon)(y) \ dt\end{equation}
%that is, 
in the sense of weak derivatives that
\begin{equation}\label{formulazionedobole28}(\rho_\varepsilon)_t(y,t)=u(y,t)\cdot \nabla \rho_\varepsilon(y,t)+r_\varepsilon(y,t)\end{equation}
since $\displaystyle \sup_{(0,T)}\|\rho_\varepsilon\|_\infty\leq \left(\int_{\R^N}|\eta_\varepsilon|^q\right)^\frac1q\sup_{(0,T)}\|\rho\|_p$, and $\displaystyle \sup_{(0,T)}\|\nabla \rho_\varepsilon\|_\infty\leq \varepsilon^{-1}C$, for some constant $C>0$ and $\varepsilon$ is fixed. The same bound holds for $r_\varepsilon$, while $\|u\|_q$ is integrabile in $t$. So the Lebesgue convergence theorem implies \eqref{formulazionedobole28}. 
\\
In particular, equation \eqref{formulazionedobole28} is true for every $y\in\Omega_\varepsilon$. Moreover, we have $\rho_\varepsilon(y,\cdot)\in W^{1,1}(0,T)$, using exactly the same bounds we have deduced in order to apply the Lebesgue dominate convergence (since in both cases we are estimating a time integral). On the other hand,
%choosing, instead of $\psi$, the function 
%\begin{equation}\label{taoapproxcond1}\eta_\delta^{t_0}(t):=\begin{cases} 1 & t\in [0,t_0]\\
%0 & t\in [t_0+\delta,T)
%\end{cases}\end{equation}
%such that 
%\begin{equation}\label{taoapproxcond2}\int_0^T (\eta_\delta^{t_0})'(t) \ dt=-1\end{equation}
using the Lebesgue differentiation theorem, we have that for almost every $t_0\in(0,T)$, with $y\in \Omega_\varepsilon$ fixed,
\begin{equation}\label{abscont910a}\rho_\varepsilon(y,t_0)=\rho_\varepsilon^0(y)+\int_0^{t_0}\big(u\cdot \nabla \rho_\varepsilon+r_\varepsilon\big)(y,t) \ dt\end{equation}
In particular, the right-side is a continuous version of $\rho_\varepsilon(y,\cdot)$. This means that $\rho_\varepsilon(y,\cdot)$ is absolutely continuous. Consider now $\beta\in C^1(\R)$ with $\beta'$ bounded. The weak chain rule says that 
$$(\beta(\rho_\varepsilon))_t=\beta'(\rho_\varepsilon)(\partial \rho_{\varepsilon})_t=\beta'(\rho_\varepsilon)(u\cdot \nabla \rho_\varepsilon+r_\varepsilon)=u\cdot \nabla (\beta(\rho_\varepsilon))+\beta'(\rho_\varepsilon) r_\varepsilon$$
since $\rho_\varepsilon$ has classical regularity in space. So, in particular, being $\beta'$ bounded, $\beta(\rho_\varepsilon)\in W^{1,1}(0,T)$ and so, moreover,
\begin{equation}\beta(\rho_\varepsilon)(y,t)=\beta(\rho^0_\varepsilon)(y)+\int_0^t\big(u\cdot \nabla (\beta(\rho_\varepsilon))+\beta'(\rho_\varepsilon) r_\varepsilon\big)(y,\tau) \ d\tau\end{equation}
%In terms of weak derivatives we have 
%$$\int_0^T\beta(\rho_\varepsilon)(y,t)\psi'(t) \ dt=-\int_0^T\psi(t) \big(u\cdot \nabla (\beta(\rho_\varepsilon))+\beta'(\rho_\varepsilon) r_\varepsilon\big)(y,t) \ dt$$
%for every $\psi\in C_c^\infty(0,T)$. 
is its continuous version. Consider now $\phi\in C_c^\infty([0,T)\times\Omega)$, so that $\phi(T,x)=0$. We know that, by the product rule,
%$$(\beta(\rho_\varepsilon) \phi)_t=(\beta(\rho_\varepsilon))_t\phi+\beta(\rho_\varepsilon) \phi_t$$
%so that 
$\beta(\rho_\varepsilon)\phi\in W^{1,1}(0,T)$. Moreover  
$$0=\beta(\rho_\varepsilon(T))\phi(T)=\beta(\rho^0_\varepsilon)\phi(0)+\int_0^T(\beta(\rho_\varepsilon))_t \phi \ dt+\int_0^T\beta(\rho_\varepsilon) \phi_t \ dt$$
Then we have 
$$\int_0^T\bigg(\int_{\Omega_\varepsilon} \beta(\rho_\varepsilon) \phi_t \ dx\bigg) \ dt=\int_{\Omega_\varepsilon}\bigg(\int_0^T \beta(\rho_\varepsilon) \phi_t \ dt\bigg) \ dx=$$
$$=-\int_{\Omega_\varepsilon}\beta(\rho^0_\varepsilon)\phi(0) \ dx-\int_{\Omega_\varepsilon}\bigg(\int_0^T(u\cdot \nabla (\beta(\rho_\varepsilon))+\beta'(\rho_\varepsilon) r_\varepsilon) \phi \ dt \bigg) \ dx=$$
$$=-\int_{\Omega_\varepsilon}\beta(\rho^0_\varepsilon)\phi(0) \ dx+\int_0^T\bigg(\int_{\Omega_\varepsilon}\beta(\rho_\varepsilon) u\cdot \nabla \phi \ dx\bigg) \ dt-\int_0^T\bigg(\int_{\Omega_\varepsilon}\beta'(r_\varepsilon)r_\varepsilon \phi \ dx\bigg) \ dt$$
that is the thesis, using that $\nabla\cdot u=0$ and $\phi=0$ on the boundary of $\Omega_\varepsilon$, thanks to the choice of $\varepsilon$. 
\end{proof}
%\end{small}
\subsection{Uniqueness}
We finally prove a uniqueness theorem.
\begin{theorem}[Uniqueness]\label{unicitalemmarho0} Let $\Omega$ be a bounded domain. Consider $p\in(1,\infty]$, and $\rho\in L^\infty(0,T;L^p(\Omega))$ a solution of \eqref{trasportodiqua} with initial condition $\rho^0\equiv0$, $u\in L^1(0,T;W_0^{1,q}(\Omega))\cap L^1(0,T;C(\overline\Omega))$ and $\nabla \cdot u=0$, where $q$ is the conjugate of $p$. Then, $\rho\equiv 0$.
\end{theorem}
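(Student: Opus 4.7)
The plan is to apply the renormalization result, Theorem~\ref{lemmaapproxbeta}, and reduce the problem to a conservation law for $\int_\Omega \beta(\rho)\,dx$; the extra hypothesis $u\in L^1(0,T;C(\overline\Omega))$ will be used exclusively to kill a boundary-cutoff term. Pick any $\beta\in C^1(\R)$ with $\beta,\beta'$ bounded and $\beta(0)=0$. Passing $\varepsilon\to 0$ in the renormalized identity is standard: the commutator piece $r_\varepsilon \beta'(\rho_\varepsilon)\phi$ vanishes because $r_\varepsilon\to 0$ in $L^1(0,T;L^\gamma(\Omega_0))$ while $\beta'(\rho_\varepsilon)\phi$ is bounded; the initial term drops since $\rho_0\equiv 0$ and $\beta(0)=0$; the remaining two terms pass to the limit because $\beta(\rho_\varepsilon)\to\beta(\rho)$ a.e.\ (up to subsequence, via mollifier convergence) and boundedly, while $\phi_t$ and $u\cdot\nabla\phi$ are integrable on the compact support of $\phi$. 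This yields
\begin{equation*}
-\int_0^T\!\!\int_\Omega \beta(\rho)\,\phi_t\,dx\,dt+\int_0^T\!\!\int_\Omega \beta(\rho)\,u\cdot\nabla\phi\,dx\,dt=0
\end{equation*}
for every $\phi\in C_c^\infty([0,T)\times\Omega)$.

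Now I would test against $\phi(x,t)=\chi_\delta(x)\,\psi(t)$, where $\psi\in C_c^\infty([0,T))$ is arbitrary and $\chi_\delta\in C_c^\infty(\Omega)$ is a smooth cutoff equal to $1$ on $\{\mathrm{dist}(\cdot,\partial\Omega)>2\delta\}$, supported in $\{\mathrm{dist}(\cdot,\partial\Omega)>\delta\}$, with $|\nabla\chi_\delta|\leq C/\delta$ and $\int_\Omega|\nabla\chi_\delta|\,dx\leq C$ uniformly in $\delta$ (available because $\partial\Omega$ is smooth). As $\delta\to 0$, the time-derivative term converges by bounded convergence to $-\int_0^T \psi'(t)F(t)\,dt$, where $F(t):=\int_\Omega \beta(\rho(x,t))\,dx$. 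For the flux term, continuity of $u(\cdot,t)$ on $\overline\Omega$ together with the vanishing trace gives $\omega(t,\delta):=\sup_{\{\mathrm{dist}(\cdot,\partial\Omega)\leq 2\delta\}}|u(\cdot,t)|\to 0$ pointwise in $t$, so
\begin{equation*}
\Bigl|\int_\Omega \beta(\rho)\,u\cdot\nabla\chi_\delta\,dx\Bigr|\leq \|\beta\|_\infty\,\omega(t,\delta)\int_\Omega|\nabla\chi_\delta|\,dx\leq C\|\beta\|_\infty\,\omega(t,\delta),
\end{equation*}
and since $\omega(t,\delta)\leq \|u(\cdot,t)\|_{C(\overline\Omega)}\in L^1(0,T)$, dominated convergence in $t$ forces the flux contribution to vanish.

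The surviving identity $\int_0^T \psi'(t)F(t)\,dt=0$ for every $\psi\in C_c^\infty([0,T))$ is exactly the weak formulation of $F'=0$ on $(0,T)$ with $F(0)=\int_\Omega\beta(\rho_0)\,dx=0$, so $F\equiv 0$ a.e. Choosing $\beta(s):=s^2/(1+s^2)$, which is nonnegative, $C^1$ with $\beta$ and $\beta'$ bounded, and satisfies $\beta^{-1}(0)=\{0\}$, yields $\rho(x,t)=0$ for a.e.\ $(x,t)$, proving the thesis. The main obstacle I anticipate is the boundary-cutoff step: $|\nabla\chi_\delta|$ blows up like $1/\delta$, and only the continuity of $u$ up to $\overline\Omega$ together with its vanishing trace keeps the flux under control; without the extra hypothesis $u\in L^1(0,T;C(\overline\Omega))$ one would have to rely on a Hardy-type bound on $u/\mathrm{dist}(\cdot,\partial\Omega)$ that is not available for a general $W_0^{1,q}$ field.
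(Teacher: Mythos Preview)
Your proof is correct and takes essentially the same route as the paper: renormalize via Theorem~\ref{lemmaapproxbeta}, pass to the limit $\varepsilon\to 0$, then test against $\psi(t)\chi_\delta(x)$ and exploit $u\in L^1(0,T;C(\overline\Omega))$ with vanishing boundary values (together with the $O(\delta)$ measure of the boundary strip) to annihilate the flux term as the cutoff exhausts $\Omega$. The only difference is the endgame: the paper approximates $\beta(t)=|t|^p\wedge M$ by admissible functions and then sends $M\to\infty$ to obtain the stronger intermediate conservation $\|\rho(t)\|_p=\|\rho_0\|_p$ (which it reuses in Theorem~\ref{corollariodatoinizialeconservato1a}), whereas your single admissible choice $\beta(s)=s^2/(1+s^2)$ is a cleaner shortcut when uniqueness alone is the goal.
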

\begin{proof} First of all suppose $p<\infty$. It is not restrictive, since then $\rho\in L^{\infty}(0,T;L^p(\Omega))$, and if $p'=\infty$ the conjugate is $q'=1$, and so $q>q'$, since $\frac1p+\frac1q=1$. So we can apply theorem \ref{lemmaapproxbeta}. Letting $\varepsilon\to0$ in the statement of theorem \ref{lemmaapproxbeta}, with $\beta \in C^1(\R)$ bounded, and with $\beta'$ bounded, we have that 
\begin{equation}\label{eqrifth95a}-\int_0^T\bigg(\int_{\Omega} \beta(\rho) \frac{\partial \phi}{\partial t} \ dx\bigg) \ dt-\int_\Omega \beta(\rho_0) \phi(x,0) \ dx +\int_0^T\bigg(\int_{\Omega} \beta(\rho) u\cdot \nabla \phi \ dx\bigg) \ dt=0\end{equation}
where we used that $r_\varepsilon\to 0$ in $L^1(0,T;L^1_{loc}(\Omega))$. Let now $M\in(0,\infty)$. We would choose $\beta(t):=(|t|^p\wedge M)$, where $a\wedge b:=\min\{a,b\}$. The function is clearly bounded, but it is not in $C^1(\R)$. However, it is possible to choose $\beta_k(t)$ a sequence such that $\beta_k\in C^1(\R)$ for every $k$, $\beta_k(t)\leq \beta(t)$ for every $k\in\mathbb{N}$ and $t\in \R$ and finally, for every $t\in \R$, $\beta_k(t)\leq \beta_{k+1}(t)$, with $\beta_k(t)\to \beta(t)$ as $k\to\infty$, for almost every $t\in \R$. So \eqref{eqrifth95a} implies that 
\begin{equation}\label{equationrefpaslim912a}-\int_0^T\bigg(\int_{\Omega} \beta_k(\rho) \frac{\partial \phi}{\partial t} \ dx\bigg) \ dt -\int_\Omega \beta_k(\rho_0) \phi(x,0) \ dx  +\int_0^T\bigg(\int_{\Omega} \beta_k(\rho) u\cdot \nabla \phi \ dx\bigg) \ dt=0\end{equation}
for every $k\in\mathbb{N}$. It is clear that $\beta_k(t)\leq \beta(t)\leq M$. We now focus the attention on the last term of \eqref{equationrefpaslim912a}. We now choose $\phi(x,t)=\psi(t)\varphi(x)$ in a precise way. In particular, we choose a sequence $\varphi=\varphi_h\in C_c^\infty(\Omega)$ such that $\varphi_h\equiv 1$ over $\Omega_{\frac1h}$, and $|\nabla\varphi_h|\leq 2h$. See \cite{gilbarg2015elliptic}. Then, fixed $\psi$, and defined $\phi_h:=\psi \varphi_h$, 
\begin{equation}
\left|\int_0^T\bigg(\int_{\Omega} \beta_k(\rho) u\cdot \nabla \phi \ dx\bigg) \ dt\right|\leq M\left(\sup_{t\in [0,T]}|\psi(t)|\right)\int_0^T 2h\int_{\Omega\setminus \Omega_{\frac1h}} |u| \ dx \ dt
\end{equation}
We remark that $u\in L^1(0,T;C(\overline\Omega))$ and there exists a constant $C>0$ such that $\displaystyle 2h\int_{\Omega\setminus\Omega_{\frac1h}}|u| \ dx\leq C\sup_{\overline\Omega}|u|$, where $C$ is such that $2h|\Omega\setminus \Omega_\frac1h|\leq C$. See the note below.\footnote{In fact, if $f(\varepsilon):=|\tilde \Omega_\varepsilon|$, where here $|\cdot|$ is the measure of the set, and $\tilde \Omega_\varepsilon:=\Omega\setminus\Omega_\varepsilon$, so that $|\tilde \Omega_\varepsilon|=|\Omega|-|\Omega_\varepsilon|$, then by the coarea formula
\begin{equation}
-\frac{d}{d\varepsilon}|\Omega_\varepsilon|=\int_{\partial\Omega_\varepsilon}d\sigma \ \Longrightarrow \ \frac{d}{d\varepsilon}|\tilde \Omega_\varepsilon|=-\frac{d}{d\varepsilon}|\Omega_\varepsilon|=\int_{\partial\Omega_\varepsilon}d\sigma
\end{equation}
so that $\displaystyle |\tilde\Omega_{\overline\varepsilon}|=\int_0^{\overline\varepsilon}|\partial\Omega_\varepsilon|_\sigma \ d\varepsilon$, where here $|\cdot|_\sigma$ is the surface measure. Clearly, $\displaystyle \sup_{\varepsilon\in I}|\partial\Omega_\varepsilon|_\sigma<\infty$, where $I$ is a small neighborhood of the origin, since $f$ is $C^1$ near the origin, extending the function to the negative values approaching the domain from the exterior. So $\displaystyle|\tilde \Omega_\varepsilon|\leq \varepsilon\left(\sup_{\delta\in I(0)}|\partial\Omega_\delta|_\sigma\right)$, for $\varepsilon<<1$. So the claim follows choosing $\varepsilon=\varepsilon_h=\frac1h$.} Since $\|u\|_\infty\in L^1((0,T))$, and $|u|\in C(\overline\Omega)$ for almost every $t\in (0,T)$, then $\displaystyle \lim_{h\to\infty} 2h\int_{\Omega\setminus \Omega_\frac1h} |u| \ dx=0$ for almost every $t\in (0,T)$, using the coarea formula. Then, for every $k\in\N$, and $\psi$ smooth, choosing $\phi=\phi_h$ and sending $h\to\infty$ in \eqref{equationrefpaslim912a} we have 
\begin{equation}
-\int_0^T\bigg(\int_{\Omega} \beta_k(\rho) \psi'(t) \ dx\bigg) \ dt -\int_\Omega \beta_k(\rho_0) \psi(0) \ dx =0
\end{equation}
We suppose now $\psi(0)=1$. Using again the boundedness of $\psi'$ and the fact that $\beta_k$ has been taken increasing, letting $k\to\infty$ we have, choosing $M=n\in\N$ fixed
$$-\int_0^T\psi'(t)\bigg(\int_\Omega |\rho|^p\wedge n \ dx\bigg) \ dt-\int_\Omega |\rho_0|^p\wedge n \ dx=0$$
Choosing now $\psi$ as an approximation of a Dirac-delta with mass in $t=t_0$ and $\psi(0)=1$, we find that exists $E_n\subseteq (0,T)$, $|E_n|=0$, such that for every $t_0\in (0,T)\setminus E_n$ 
\begin{equation}\label{integrat0eqtot1a}\int_\Omega |\rho(t_0)|^p\wedge n \ dx\equiv\bigg(\int_\Omega |\rho|^p\wedge n \ dx\bigg)(t_0)=\int_\Omega |\rho_0|^p\wedge n \ dx\end{equation}
Since the sequence $|\rho|^p\wedge n$ is increasing in $n$, and $|\rho|^p\wedge n\to |\rho|^p$ when $n\to\infty$, and \eqref{integrat0eqtot1a} holds for every $n\in\N$ and $\displaystyle t_0\in (0,T)/\bigcup_n E_n$, we have that for almost every $t_0\in (0,T)$ 
\begin{equation}\label{finalineq919a}\|\rho(t_0)\|_p=\|\rho_0\|_p\end{equation}
Since, by the hypothesis $\rho_0\equiv0$, this means that for almost every $t_0\in (0,T)$, $\rho(t_0)=0$ almost every $x\in \Omega$. This means that $\rho$ is zero in $L^\infty(0,T;L^p(\Omega))$, that is the thesis. 

%If $p=\infty$ we observe that $\rho\in L^\infty(0,T;L^{p'}(\Omega))$, for every $p'<\infty$. We choose $q'\in \left(1,\frac{n}{n-1}\right)$, and being $u\in L^1(0,T;W^{1,1}(\Omega))$, we have $u\in L^1(0,T;L^{q'}(\Omega))$, that is the only thing needed in the proof, and $p'$ the conjugate of $q'$. If $n=1$ the function is even continuous in space. So $\rho\equiv 0$, since $q'>1$ and so the velocity field is in the suitable space.
\end{proof}
%\begin{remark}  Moreover there are no hypothesis on the boundary (but they are necessary to assure the existence of a solution).\end{remark}
The next corollary follows from the proof of theorem \ref{unicitalemmarho0}.
\begin{theorem}\label{corollariodatoinizialeconservato1a} Let $\Omega$ be a bounded domain. Consider $p\in(1,\infty]$, and $\rho_0\in L^p(\Omega)$, $u\in L^1(0,T;C(\overline\Omega))$, such that $u\big|_{\partial\Omega}\equiv0$. Let $\rho$ a measurable function on $\Omega\times(0,T)$ such that, for every $\beta\in C^1(\R)$, with $\beta,\beta'$ bounded,
\begin{equation}-\int_0^T\bigg(\int_{\Omega} \beta(\rho) \frac{\partial \phi}{\partial t} \ dx\bigg) \ dt-\int_\Omega \beta(\rho_0) \phi(x,0) \ dx +\int_0^T\bigg(\int_{\Omega} \beta(\rho) u\cdot \nabla \phi \ dx\bigg) \ dt=0\end{equation}
for every $\phi \in C_c^\infty(\Omega\times[0,T))$. Then, for almost every $t_0\in (0,T)$ we have 
\begin{equation}
\|\rho(t_0)\|_p=\|\rho_0\|_p
\end{equation}
\end{theorem}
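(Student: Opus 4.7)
The statement is essentially the second half of the proof of Theorem \ref{unicitalemmarho0}: that proof first derived the renormalized identity \eqref{eqrifth95a} for all bounded $\beta\in C^1(\R)$ with bounded derivative, then deduced $\|\rho(t_0)\|_p=\|\rho_0\|_p$ from it. Here \eqref{eqrifth95a} is taken as the hypothesis, so my plan is simply to extract and rerun that second half, with the observation that the assumption $u\in L^1(0,T;C(\overline\Omega))$ with $u|_{\partial\Omega}\equiv 0$ is exactly what makes the argument go through.

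\textbf{Step 1 (localize the test function).} Following \cite{gilbarg2015elliptic}, pick a standard cutoff $\varphi_h\in C_c^\infty(\Omega)$ with $\varphi_h\equiv 1$ on $\Omega_{1/h}$, $0\le \varphi_h\le 1$ and $|\nabla\varphi_h|\le 2h$. For $\psi\in C^\infty([0,T))$ compactly supported in $[0,T)$ with $\psi(0)=1$, set $\phi_h(x,t):=\psi(t)\varphi_h(x)$ and plug it into the hypothesis. Since $\beta$ is bounded by some $M$,
\begin{equation*}
\bigg|\int_0^T\!\!\int_\Omega \beta(\rho)\,u\cdot\nabla\phi_h\,dx\,dt\bigg|\le 2Mh\,\sup_{[0,T]}|\psi|\int_0^T\int_{\Omega\setminus\Omega_{1/h}}|u(t,x)|\,dx\,dt.
\end{equation*}
The coarea bound from the footnote of Theorem \ref{unicitalemmarho0} gives $2h|\Omega\setminus\Omega_{1/h}|\le C$ for large $h$, hence the inner integral is dominated by $C\|u(t)\|_\infty\in L^1(0,T)$; and since $u(t,\cdot)$ is continuous up to $\overline\Omega$ and vanishes on $\partial\Omega$, it tends pointwise to $0$ in $t$. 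Dominated convergence kills the spatial term. In the time-derivative term, $\beta(\rho)\varphi_h\to \beta(\rho)$ boundedly, so sending $h\to\infty$ yields
\begin{equation*}
-\int_0^T\psi'(t)\bigg(\int_\Omega\beta(\rho)\,dx\bigg)dt=\int_\Omega\beta(\rho_0)\,dx.
\end{equation*}

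\textbf{Step 2 (truncated $L^p$ norm).} Fix $n\in\N$ and consider $\beta(t):=|t|^p\wedge n$, which is bounded but only Lipschitz. Approximate it from below by an increasing sequence $\beta_k\in C^1(\R)$ with bounded derivatives (same construction as in Theorem \ref{unicitalemmarho0}); the identity above passes to the limit by monotone convergence. Next, choose $\psi$ smoothly approximating the indicator of $[0,t_0]$ (with $\psi(0)=1$) so that $-\psi'$ approximates $\delta_{t_0}$. Standard Lebesgue-point reasoning on the integrable function $t\mapsto \int_\Omega(|\rho(t)|^p\wedge n)\,dx$ produces a null set $E_n\subset(0,T)$ such that for every $t_0\notin E_n$,
\begin{equation*}
\int_\Omega |\rho(t_0)|^p\wedge n\,dx=\int_\Omega |\rho_0|^p\wedge n\,dx.
\end{equation*}
Letting $n\to\infty$ and applying monotone convergence on both sides gives $\|\rho(t_0)\|_p=\|\rho_0\|_p$ in $[0,\infty]$ for every $t_0\in (0,T)\setminus\bigcup_n E_n$.

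\textbf{Main obstacle.} The only delicate point is Step 1: one needs both $u|_{\partial\Omega}=0$ and the uniform bound $2h|\Omega\setminus\Omega_{1/h}|\le C$ (via coarea on the smooth domain $\Omega$) to absorb the factor $2h$ coming from $|\nabla\varphi_h|$ and kill the cutoff boundary term. Without the boundary vanishing of $u$, the $\sup_{\overline\Omega}|u|$ bound would not let the pointwise $h$-limit be zero. Everything else---the monotone approximation of $|\cdot|^p\wedge n$ by smooth bounded $\beta_k$, and the Dirac-delta approximation in $\psi$---is routine and is already spelled out in the proof of Theorem \ref{unicitalemmarho0}.
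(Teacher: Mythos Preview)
Your proposal is correct and is exactly the paper's own argument: the paper does not give a separate proof of this theorem but simply states that it ``follows from the proof of theorem \ref{unicitalemmarho0}'', and what you have written is precisely the relevant portion of that proof (cutoffs $\varphi_h$ plus the coarea/boundary argument to kill the transport term, then the monotone approximation of $|t|^p\wedge n$ and the Dirac-delta choice of $\psi$). There is nothing to add.
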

\begin{remark} The assumption on $\rho$ are very "weak" (only measurability is required).
%and the hypothesis on the velocity field are minimal: we ask just a bit more than integrability in space. 
Observe that in the proof above it is not used that $q$ is the conjugate of $p$ (if not in proving that the approximate integral equation can be send to limit, but this step is skipped in the present statement). The minimality of these hypothesis will be useful in a moment. \end{remark}
\section{Renormalized solutions}
In the previous section we studied the properties of weak solutions to the transport equation. However, there is another way to interpret solutions (that is equivalent to the one already defined, as we will see in a moment).  
\begin{definition}[Renormalized solutions] Let $\Omega$ be a bounded domain in $\R^N$, and $T>0$. Let $p\in (1,\infty]$, $q$ its conjugate and $\rho_0\in L^p(\Omega)$ an initial density. Let $u\in L^1(0,T;W_0^{1,q}(\Omega))$, $\nabla \cdot u=0$ a velocity field. We will say that $\rho\in L^\infty(0,T;L^p(\Omega))$  is a \textit{renormalized solution} of 
\begin{equation}\begin{cases} \rho_t-u\cdot \nabla \rho=0 & \text{in $(0,T)\times \Omega$}\\
\rho(0)=\rho_0
\end{cases}\end{equation}
if, for every $\beta\in C^1(\R;\R)$, with $\beta$ and $\beta'$ bounded, it holds 
\begin{equation}\label{renormalizedweek96}-\int_0^T \bigg(\int_\Omega  \beta(\rho) \frac{\partial \phi}{\partial t}\ dx\bigg) \ dt-\int_\Omega \beta(\rho_0(x))\phi(x,0) \ dx +\int_0^T\bigg(\int_\Omega \beta(\rho) u\cdot \nabla \phi \ dx\bigg) \ dt=0\end{equation}
for every $\phi\in C_c^\infty([0,T)\times\Omega)$. Such a function $\beta$ is said \textit{admissible function}, and we will write $\beta\in\mathcal{A}$, with $\mathcal{A}$ the \textit{set of admissible functions}. For every $\beta\in\mathcal{A}$ we define 
\begin{equation}C_\beta:=\sup_\R|\beta|+\sup_\R|\beta'|<\infty\end{equation}
\end{definition}
\begin{theorem}\label{lemma82weakren} Let $\Omega$ be a bounded domain in $\R^N$ and $T>0$ a positive time. Let $p\in (1,\infty]$ and $\rho\in L^\infty(0,T;L^p(\Omega))$ solution of \eqref{trasportodiqua} with initial density $\rho_0\in L^p(\Omega)$ and assume that $u\in L^1(0,T;W^{1,q}(\Omega))$, $\nabla \cdot u=0$. Then $\rho \in L^\infty(0,T;L^p(\Omega))$ is a renormalized solution to the problem for every admissible function $\beta$. 
\end{theorem}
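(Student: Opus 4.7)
The plan is to apply Theorem~\ref{lemmaapproxbeta} at the mollified level and then pass to the limit $\varepsilon \to 0$ in each term. Given an arbitrary test function $\phi \in C_c^\infty([0,T)\times\Omega)$, I would fix a compact set $\Omega_0 \subset\subset \Omega$ containing its spatial support. Since $u \in L^1(0,T;W^{1,q}(\Omega))$, I can invoke Theorem~\ref{lemmaapproxbeta} with $\alpha = q$; the exponent $\gamma$ arising in Theorem~\ref{approxregularixazione} then satisfies $\tfrac{1}{\gamma} = \tfrac{1}{q} + \tfrac{1}{p} = 1$, so $r_\varepsilon \to 0$ in $L^1(0,T;L^1(\Omega_0))$. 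For every $\varepsilon < \mathrm{dist}(\Omega_0,\partial\Omega)$ the corresponding regularized identity holds, and because $\phi$ vanishes outside $\Omega_0 \subset \Omega_\varepsilon$, each integral in that identity may harmlessly be viewed as an integral over $\Omega$.

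Next I would establish the convergences needed to pass to the limit. Standard mollification theory gives $\rho_\varepsilon(\cdot,t) \to \rho(\cdot,t)$ in $L^p(\Omega_0)$ for almost every $t$ when $p<\infty$, and pointwise almost everywhere with a uniform $L^\infty$ bound when $p=\infty$; in either case $\rho_\varepsilon \to \rho$ in measure on $(0,T)\times\Omega_0$. By part (ii) of Theorem~\ref{propositionconvmeas31}, continuity of $\beta$ yields $\beta(\rho_\varepsilon) \to \beta(\rho)$ in measure, and combining this with the uniform pointwise bound $|\beta(\rho_\varepsilon)| \leq C_\beta$ and part (i) of the same theorem upgrades the convergence to $\beta(\rho_\varepsilon) \to \beta(\rho)$ in $L^s((0,T)\times\Omega_0)$ for every $s \in [1,\infty)$. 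The analogous statement holds for $\beta(\rho_\varepsilon^0) \to \beta(\rho_0)$ in $L^s(\Omega_0)$.

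Once these convergences are in place, the limit is essentially bookkeeping: the first two terms on the left-hand side converge via the $L^1_{\mathrm{loc}}$ convergence of $\beta(\rho_\varepsilon)$ and $\beta(\rho_\varepsilon^0)$ paired against the bounded, compactly supported $\phi_t$ and $\phi(0,\cdot)$; for the third term, dominated convergence applies with the dominating function $C_\beta \|\nabla \phi\|_\infty |u|$ on $(0,T)\times\Omega_0$, which lies in $L^1$ since $u \in L^1(0,T;L^q(\Omega))$ and $\Omega_0$ is bounded. The right-hand side vanishes because $r_\varepsilon \to 0$ in $L^1(0,T;L^1(\Omega_0))$ while $|\beta'(\rho_\varepsilon)\phi| \leq C_\beta \|\phi\|_\infty$ is uniformly bounded. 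Collecting these limits produces exactly the identity \eqref{renormalizedweek96} for $\rho$.

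The main technical point I anticipate is the upgrade from convergence in measure of $\beta(\rho_\varepsilon)$ to strong $L^s$ convergence, since $\rho_\varepsilon$ itself may only converge weakly or pointwise to $\rho$ (especially in the endpoint $p=\infty$, where $p$-norm control does not yield strong $L^p$ convergence). The argument depends crucially on the boundedness of the admissible $\beta$, which is precisely the hypothesis that enables Theorem~\ref{propositionconvmeas31} to do the work; everything else reduces to a routine application of the explicit decay rate of $r_\varepsilon$ furnished by Theorem~\ref{approxregularixazione}.
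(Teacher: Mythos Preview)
Your proposal is correct and follows essentially the same approach as the paper: invoke Theorem~\ref{lemmaapproxbeta} with $\alpha=q$ so that $\gamma=1$ and $r_\varepsilon\to 0$ in $L^1(0,T;L^1_{\mathrm{loc}}(\Omega))$, then send $\varepsilon\to 0$ using the boundedness of $\beta$ and $\beta'$. The paper's proof is in fact a one-line sketch that omits the justification of the limit in the non-remainder terms; your use of convergence in measure together with Theorem~\ref{propositionconvmeas31} to upgrade $\beta(\rho_\varepsilon)\to\beta(\rho)$ in $L^s$ is a clean way to fill in exactly the detail the paper leaves implicit.
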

\begin{proof} By theorem \ref{lemmaapproxbeta} we know that 
\begin{equation}-\int_0^T\bigg(\int_{\Omega_\varepsilon} \beta(\rho_\varepsilon) \frac{\partial \phi}{\partial t} \ dx\bigg) \ dt-\int_{\Omega_\varepsilon} \beta(\rho_\varepsilon^0) \ \phi(0,x) \ dx +\int_0^T\bigg(\int_{\Omega_\varepsilon} \beta(\rho_\varepsilon) u\cdot \nabla \phi \ dx\bigg) \ dt=\int_0^T\bigg(\int_{\Omega_\varepsilon} r_\varepsilon\beta'(\rho_\varepsilon)\phi \ dx\bigg)\ dt\end{equation}
with $r_\varepsilon\to 0$ in $L^1(0,T;L^\gamma_{loc}(\Omega))$, with $\displaystyle \frac1\gamma=\frac1q+\frac1p=1 \Longrightarrow \gamma=1$. So, letting $\varepsilon\to0$, being $\beta$ bounded and $|\beta'(\rho_\varepsilon)|\leq C_\beta$, we have that the thesis follows. 
\end{proof}
\subsection{Classical regularity}
Before introducing the \textit{stability problem}, we focus our attention to the regularity of the weak solution to the transport equation. 
\begin{theorem}[Continuity (in time) of the solution]\label{lemmacorollario} Let $p\in(1,\infty)$ and $\rho_0\in L^p(\Omega)$. Assume that $u\in L^1(0,T;W^{1,q}(\Omega))$ with $\nabla \cdot u=0$. Then $\rho\in C([0,T];L^p(\Omega))$.
\end{theorem}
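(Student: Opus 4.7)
The plan is a classical two-step upgrade. First I would show that $\rho$ admits a representative in $C_w([0,T];L^p(\Omega))$ (weak-in-time continuity); then I would combine this with the norm conservation $\|\rho(t)\|_p=\|\rho_0\|_p$ coming from Theorem \ref{corollariodatoinizialeconservato1a}, together with the Radon-Riesz property of the uniformly convex space $L^p(\Omega)$ for $p\in(1,\infty)$, to promote weak continuity to strong continuity.

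For the first step, I would fix $\varphi\in C_c^\infty(\Omega)$ and test \eqref{trasportodistirbuzionale} against $\phi(x,t)=\varphi(x)\psi(t)$ with $\psi\in C_c^\infty(0,T)$. This shows that $g_\varphi(t):=\int_\Omega\rho(t)\varphi\,dx$ has distributional derivative $\int_\Omega\rho(t)\,(u(t)\cdot\nabla\varphi)\,dx$, which by Hölder's inequality is bounded in absolute value by $\|\rho\|_{L^\infty(0,T;L^p)}\|\nabla\varphi\|_\infty\|u(t)\|_q$ and hence lies in $L^1(0,T)$. Therefore $g_\varphi\in W^{1,1}(0,T)\hookrightarrow C([0,T])$ after modification on a null set. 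Using a countable family dense in $L^q(\Omega)$ drawn from $C_c^\infty(\Omega)$, the uniform bound $\|\rho(t)\|_p\leq C$, and a standard $3\varepsilon$-argument, I obtain a representative $\tilde\rho\in C_w([0,T];L^p(\Omega))$.

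For the second step, by Theorem \ref{lemma82weakren} the solution $\rho$ is also renormalized, and Theorem \ref{corollariodatoinizialeconservato1a} then gives $\|\rho(t)\|_p=\|\rho_0\|_p$ for almost every $t\in(0,T)$; via the weakly continuous representative $\tilde\rho$ and weak lower semicontinuity of the norm, this equality extends to every $t\in[0,T]$. For any sequence $t_n\to t_0$ in $[0,T]$ we then have $\tilde\rho(t_n)\rightharpoonup\tilde\rho(t_0)$ weakly in $L^p(\Omega)$ with $\|\tilde\rho(t_n)\|_p=\|\tilde\rho(t_0)\|_p$; the Radon-Riesz property of the uniformly convex space $L^p(\Omega)$, $1<p<\infty$, yields strong convergence $\tilde\rho(t_n)\to\tilde\rho(t_0)$ in $L^p(\Omega)$, completing the proof. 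To upgrade from convergence along sequences to continuity on $[0,T]$, Theorem \ref{lemmaan194} applies directly with $X=L^p(\Omega)$.

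The expected main obstacle is that Corollary \ref{corollariodatoinizialeconservato1a} was proved under the extra hypothesis $u\in L^1(0,T;C(\overline\Omega))$ with $u|_{\partial\Omega}\equiv0$ pointwise, which is not among the explicit assumptions listed here. Either this is tacitly inherited from the ambient setting of the paper, or the boundary term $h\int_{\Omega\setminus\Omega_{1/h}}|u|\,dx$ arising in the proof of norm conservation would need to be rejustified using only the weak trace $u\in W_0^{1,q}(\Omega)$ (for instance via Hardy's inequality $\|u/\operatorname{dist}(\cdot,\partial\Omega)\|_{L^q}\lesssim\|\nabla u\|_{L^q}$, which forces the boundary term to vanish as $h\to\infty$), in which case the rest of the argument goes through unchanged.
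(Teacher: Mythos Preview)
Your proposal is correct and follows essentially the same route as the paper: test \eqref{trasportodistirbuzionale} with separated variables to obtain the integral representation \eqref{reformofeq812813} (your $g_\varphi\in W^{1,1}(0,T)$), upgrade to weak continuity by density, and then combine with the norm conservation \eqref{finalineq919a} to get strong continuity via the Radon--Riesz property. Your remark on the implicit use of $u\in L^1(0,T;C(\overline\Omega))$ in the norm-conservation step is well taken (the paper's proof relies on \eqref{finalineq919a}, which was derived under that extra hypothesis), and the Hardy-inequality fix you propose is a legitimate way to close this under the stated assumption $u\in W_0^{1,q}$; the invocation of Theorem~\ref{lemmaan194} at the end is unnecessary, since sequential continuity already characterizes continuity on $[0,T]$.
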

%\begin{osservazione} This theorems are the bounded-domain version of Theorem II.3, Theorem II.4 and Corollary II.2 of paper \cite{diplio:1}. $\square$
%\end{osservazione}
\begin{proof} By equation \eqref{finalineq919a} we have that $\|\rho(t)\|_p$ has a continuous version $\|\rho(t)\|_p=\|\rho_0\|_p\in C([0,T];\R)$. If we show that, moreover, for every $[0,T]\ni t_n\to t_0\in [0,T]$, it holds 
\begin{equation}\label{weakconvergenceLpLqtimetn1}\lim_{n\to\infty}\int_\Omega \big(\rho(x,t_n)-\rho(x,t_0)\big)\cdot \varphi(x)\ dx=0 \qquad \forall \ \varphi\in L^q(\Omega)\end{equation}
that is $\rho(t_n)\rightharpoonup \rho(t_0)$ in $L^p(\Omega)$, or in other words $\rho(t_n)$ converges weakly to $\rho(t_0)$ in $L^p(\Omega)$. Since moreover $\|\rho(t_n)\|_p\equiv \|\rho(t_0)\|_p$, we have that 
\begin{equation}\lim_{n\to\infty}\|\rho(t_n)-\rho(t_0)\|_p=0\end{equation}
that is the continuity in $C([0,T];L^p(\Omega))$. So, we only have to prove \eqref{weakconvergenceLpLqtimetn1}. It proceeds as it follows. If in equation \eqref{trasportodistirbuzionale} we choose $\phi(x,t)=\psi(t)\varphi(x)$, we have 
\begin{equation}-\int_0^T\psi(t)\bigg(\int_\Omega \rho(x,t)\varphi(x) \ dx\bigg) \ dt -\int_\Omega \rho_0(x)\psi(0)\varphi(x)\ dx +\int_0^T\psi(t)\bigg(\int_\Omega \rho(x,t)\  \big(u(x,t)\cdot \nabla\varphi(x)\big) \ dx\bigg) \ dt=0\end{equation}
Choosing $\psi(t)$ as a Dirac-delta as before, we have, for almost every $t_0\in [0,T]$, 
\begin{equation}\label{reformofeq812813}\int_\Omega \rho(x,t_0)\varphi(x) \ dx=\int_\Omega \rho_0(x) \varphi(x) \ dx-\int_0^{t_0}\bigg(\int_\Omega \rho(x,t)\big(u(x,t)\cdot \nabla\varphi(x)\big) \ dx\bigg) \ dt\end{equation}
The continuity of the right side implies that $\displaystyle \int_\Omega \rho(x,t_0)\varphi(x) \ dx$ can be defined in the whole $[0,T]$. Consider now $h>0$. Then, for every $\varphi\in C_c^\infty(\Omega)$ we have 
\begin{small}
$$\bigg|\int_\Omega \rho(x,t_0+h)\varphi(x) \ dx-\int_\Omega \rho(x,t_0)\varphi(x) \ dx\bigg|=\bigg|\int_{t_0}^{t_0+h}\bigg(\int_\Omega \rho(x,t)\big(u(x,t)\cdot \nabla \varphi(x)\big) \ dx\bigg) \ dt\bigg|\leq M_\varphi\int_{t_0}^{t_0+h}\|\rho(t)\|_p\|u(t)\|_q \ dt$$
\end{small}
where $\displaystyle M_\varphi:=\max_{\Omega}|\nabla\varphi|$. Since $\|\rho(t)\|_p\in L^\infty((0,T))$ and $\|u(t)\|_q\in L^1((0,T))$, by the Lebesgue dominated convergence it follows that, for every $\varphi\in C_c^\infty(\Omega)$, $\displaystyle \lim_{h\to0}\int_\Omega \rho(x,t_0+h)\varphi(x) \ dx=\int_\Omega \rho(x,t_0)\varphi(x) \ dx$. But moreover $\displaystyle \|\rho(t_0+h)\|_p\leq \max_{t\in[0,T]}\|\rho(t)\|_p$. So, by a classical argument in measure theory, we have that 
\begin{equation}\lim_{h\to0}\int_\Omega \rho(x,t_0+h)g(x) \ dx=\int_\Omega \rho(x,t_0)g(x) \ dx\end{equation}
for every $g\in L^q(\Omega)$. This implies \eqref{weakconvergenceLpLqtimetn1} and thus the thesis. 
\end{proof}

%------------------------------------------------

\section{Stability}

A fundamental question about transport equation is if the convergence to a certain limit of velocity fields and initial density implies the convergence (in a certain strong sense) of weak solutions. This fact holds, and it is known as \textit{stability theorem}. Before the statement of this theorem, we prove a result of consistence of the two notions of solution. 
\begin{theorem}\label{lemmaconsistency} Let $\rho\in L^\infty(0,T;L^p(\Omega))$ and $u\in L^1(0,T;L^q(\Omega))$ with $p\in (1,\infty]$. If $\rho$ is a renormalized solution, then $\rho$ is a weak solution. Moreover, if $\rho$ is a solution and $u\in L^1(0,T;W^{1,q}(\Omega))$, with $\nabla\cdot u=0$, then $\rho$ is a renormalized solution.
\end{theorem}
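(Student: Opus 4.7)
The statement has two implications that I would handle separately, using very different tools.

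\textbf{Renormalized $\Rightarrow$ weak.} The idea is to approach the identity $\beta(s)=s$ by a sequence of admissible functions and pass to the limit in the renormalized formulation \eqref{renormalizedweek96}. Concretely, I would fix a smooth truncator $\beta_1\in C^1(\R)$ with $\beta_1(s)=s$ for $|s|\le 1$, $|\beta_1(s)|\le 2$ and $\beta_1'$ bounded, and set $\beta_k(s):=k\,\beta_1(s/k)$. Then each $\beta_k\in\mathcal{A}$, $\beta_k(s)\to s$ pointwise, and crucially $|\beta_k(s)|\le 2|s|$ with $|\beta_k'(s)|\le \|\beta_1'\|_\infty$ uniformly in $k$. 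Applying \eqref{renormalizedweek96} to $\beta_k$ with a fixed test function $\phi\in C_c^\infty([0,T)\times\Omega)$, I need to pass $k\to\infty$ in each of the three integrals. For the time-derivative term, $|\beta_k(\rho)\phi_t|\le 2|\rho||\phi_t|$, and since $\phi_t$ has compact support, $|\rho||\phi_t|\in L^1((0,T)\times\Omega)$ because $\rho\in L^\infty(0,T;L^p(\Omega))$; dominated convergence applies. The initial-data term is analogous via $|\beta_k(\rho_0)\phi(0)|\le 2|\rho_0||\phi(0)|$ and $\rho_0\phi(0)\in L^1(\Omega)$. For the transport term, Hölder gives
\begin{equation*}
\int_0^T\!\!\int_\Omega |\rho\,u\cdot\nabla\phi|\,dx\,dt\le \|\nabla\phi\|_\infty \Bigl(\sup_{[0,T]}\|\rho\|_p\Bigr)\int_0^T\|u\|_q\,dt<\infty,
\end{equation*}
so $\rho\,u\cdot\nabla\phi\in L^1$ is the required dominating function for $\beta_k(\rho)\,u\cdot\nabla\phi$. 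Dominated convergence then yields the weak formulation \eqref{trasportodistirbuzionale}.

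\textbf{Weak $\Rightarrow$ renormalized.} Under the additional hypotheses $u\in L^1(0,T;W^{1,q}(\Omega))$ and $\nabla\cdot u=0$, this is exactly the content of Theorem \ref{lemma82weakren}, which was proved via the regularization procedure of Theorem \ref{lemmaapproxbeta} followed by sending $\varepsilon\to 0$ using that the commutator remainder $r_\varepsilon$ vanishes in $L^1(0,T;L^1_{\mathrm{loc}}(\Omega))$. So this direction requires no further work beyond citing that result.

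\textbf{Main obstacle.} The only genuinely delicate point is the construction of the approximating sequence $\beta_k$ in the first direction. One might be tempted by a sequence that is merely bounded (such as $\min\{|s|,k\}\cdot\mathrm{sgn}(s)$ smoothed out), but a purely $L^\infty$ bound on $\beta_k$ would give a constant depending on $k$ and could fail to dominate. The linear bound $|\beta_k(s)|\le 2|s|$ obtained from the scaling $\beta_k(s)=k\beta_1(s/k)$ is what makes dominated convergence run cleanly; all other estimates are routine applications of Hölder's inequality and the regularity of $\phi$.
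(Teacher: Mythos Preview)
Your proposal is correct and follows essentially the same route as the paper: for the renormalized $\Rightarrow$ weak direction the paper also approximates the identity by admissible $\beta_k$ satisfying a linear growth bound $|\beta_k(t)|\le |t|$ (your scaling gives $2|s|$, which is immaterial) and passes to the limit by dominated convergence in each of the three integrals, while the converse is handled by citing Theorem~\ref{lemma82weakren}. Your remark on why the linear majorant, rather than the mere boundedness of each $\beta_k$, is the key point is exactly right.
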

\begin{proof} We already know that if $u\in L^1(0,T;W^{1,q}(\Omega))$ and $\nabla\cdot u=0$, then $\rho\in L^\infty(0,T;L^p(\Omega))$ is a renormalized solution, thanks to theorem \ref{lemma82weakren}. We have to prove the other implication. Suppose that $\rho\in L^\infty(0,T;L^p(\Omega))$ is a renormalized solution to the problem. We want to prove that it is a weak solution. We can consider a sequence $\beta_k$ of admissible solution such that 
\begin{equation}|\beta_k(t)|\leq |t|, \qquad \beta_k(t)\to t\end{equation}
a $C^1$ approximation from below, bounded and with bounded derivatives. So we have 
\begin{equation}\label{weakrenformapprox48}-\int_0^T\bigg(\int_{\Omega} \beta_k(\rho) \frac{\partial \phi}{\partial t} \ dx\bigg) \ dt-\int_\Omega \beta_k(\rho_0) \phi(x,0) \ dx +\int_0^T\bigg(\int_{\Omega} \beta_k(\rho) u\cdot \nabla \phi \ dx\bigg) \ dt=0\end{equation}
We have now the bounds $|\beta_k(\rho)|\leq |\rho|$, and
\begin{equation}\int_0^T\bigg(\int_\Omega |\beta_k(\rho)|\left|\frac{\partial\phi}{\partial t}\right| \ dx\bigg) \ dt\leq \int_0^T\bigg(\int_\Omega |\rho|\left|\frac{\partial\phi}{\partial t}\right| \ dx\bigg) \ dt<\infty\end{equation}
since $\rho\in L^\infty(0,T;L^p(\Omega))$. Similarly, we have 
\begin{equation}\int_\Omega |\beta_k(\rho_0)||\phi(x,0)| \ dx\leq \int_\Omega |\rho_0||\phi(x,0)| \ dx<\infty\end{equation}
\begin{equation}\int_0^T\bigg(\int_\Omega |\beta_k(\rho)||u||\nabla \phi|\ dx \bigg) \ dt\leq \int_0^T\bigg(\int_\Omega |\rho||u||\nabla \phi|\ dx \bigg) \ dt<\infty\end{equation}
Since $\beta_k(t)\to t$ as $k\to\infty$ for every $t\in\R$, letting $k\to\infty$ in \eqref{weakrenformapprox48}, we have equation \ref{trasportodistirbuzionale} that is the weak formulation. 
\end{proof}
The following is the main theorem of the section. As typical, given a uniqueness result as the one in theorem \ref{unicitalemmarho0}, we expect a stability result
\begin{theorem}[Stability theorem] \label{teoremaglobalstability} 
%$u^n\in L^1(0,T;C(\overline\Omega))$, $u\in L^1(0,T;W^{1,q}(\Omega))$, $\nabla\cdot u=0$, $u\in L^1(0,T;C(\overline\Omega))$ (sembra inoltre che possa essere $u\in W^{1,1}$ e $u\in L^q$: la prima serve infatti usando l'unicità per $v^2=w$, mentre la seconda è legata alla questione "soluzione o soluzione rinormalizzata": dimostrando che per ogni $\beta$ ammissibile $v:=\beta(\rho)$ è soluzione, abbiamo che $\rho\in L^\infty(0,T;L^p)$ è soluzione rinormalizzata. Per considerarla una soluzione debole servirebbe, da teorema 11, che $u\in L^q$ con $q$ il coniugato di $p$. Ma il presente teorema asserisce solo che $\rho\in L\infty(0,T;L^p)$ è soluzione rinormalizzata con velocità $u$, rimandando la questione del fatto che $\rho$ è anche debole ad una eventuale aggiunta di ipotesi.). 
%\\
%\\
%\\
Let $\Omega$ be a bounded domain, let $T>0$ and $p\in(1,\infty)$. Let $u^n\in L^1(0,T;C(\overline\Omega))$ be a sequence of velocity fields such that $u^n\big|_{\partial\Omega}\equiv0$. Suppose that exists 
\begin{equation}u\in L^1(0,T;W_0^{1,1}(\Omega))\cap L^1(0,T;C(\overline\Omega)), \quad \nabla\cdot u=0 \end{equation}
such that $u^n\to u$ in $L^1(0,T;L^1(\Omega))$. Let $\rho^n$ be a bounded sequence of measurable functions in $L^\infty(0,T;L^p(\Omega))$, that is $\displaystyle \sup_{n\in\N}\|\rho_n\|_{L^\infty(0,T;L^p(\Omega))}<\infty$, such that
\begin{equation}
-\int_0^T \bigg(\int_\Omega  \beta(\rho^n) \frac{\partial \phi}{\partial t}\ dx\bigg) \ dt-\int_\Omega \beta(\rho_n^0)\phi(x,0) \ dx +\int_0^T\bigg(\int_\Omega \beta(\rho^n) u^n\cdot \nabla \phi \ dx\bigg) \ dt=0
\end{equation}
for every $\beta\in \mathcal{A}$ and $\phi\in C_c^\infty(\Omega\times [0,T))$, and for some initial condition $\rho_n^0\in L^p(\Omega)$. Assume that 
\begin{equation}
\begin{cases}
\rho_n^0\to \rho^0 & \text{in $L^p(\Omega)$}\\
\beta(\rho_n^0)\to \beta(\rho^0) & \text{in $L^1(\Omega)$, $\forall \beta\in\mathcal{A}$}
\end{cases} \qquad n\to\infty
\end{equation}
Then $\rho^n$ converges, with respect to the norm of $L^\infty(0,T;L^p(\Omega))$, to some function $\rho\in L^\infty(0,T;L^p(\Omega))$ that is a renormalized solution of the transport equation with velocity field $u$ and initial density $\rho^0$, in $L^\infty(0,T;L^p(\Omega))$.
\end{theorem}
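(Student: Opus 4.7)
First, since $\sup_n\|\rho^n\|_{L^\infty(0,T;L^p(\Omega))}<\infty$ with $p\in(1,\infty)$, Banach--Alaoglu applied to the dual of the separable space $L^1(0,T;L^q(\Omega))$ gives a subsequence $\rho^{n_k}\stackrel{*}{\rightharpoonup}\rho$. A diagonal extraction over a countable dense family of $\mathcal{A}$ yields, along a further subsequence, $\beta(\rho^{n_k})\stackrel{*}{\rightharpoonup}\bar\beta$ in $L^\infty((0,T)\times\Omega)$ for every $\beta\in\mathcal{A}$. I pass to the limit in the renormalized equation: the $\phi_t$ and initial-datum terms converge by weak-$*$ and by the hypothesis $\beta(\rho_n^0)\to\beta(\rho^0)$ in $L^1$ respectively, while for the velocity term I split
\[
\beta(\rho^{n_k})u^{n_k}\cdot\nabla\phi-\bar\beta\,u\cdot\nabla\phi \;=\; \beta(\rho^{n_k})(u^{n_k}-u)\cdot\nabla\phi \,+\, (\beta(\rho^{n_k})-\bar\beta)\,u\cdot\nabla\phi,
\]
controlling the first term by $\|\beta\|_\infty\|\nabla\phi\|_\infty\|u^{n_k}-u\|_{L^1(0,T;L^1(\Omega))}\to 0$ and the second by weak-$*$ convergence against $u\cdot\nabla\phi\in L^1((0,T)\times\Omega)$. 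Hence $\bar\beta$ solves the weak transport equation with datum $\beta(\rho^0)$ and velocity $u$ for every $\beta\in\mathcal{A}$.

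The main obstacle is to show $\bar\beta=\beta(\rho)$. Here I exploit the $L^\infty$-boundedness of $\bar\beta$: since $\bar\beta\in L^\infty((0,T)\times\Omega)$ and $u\in L^1(0,T;W_0^{1,1}(\Omega))$ with $\nabla\cdot u=0$, Theorem \ref{lemma82weakren} taken with $p=\infty$ (conjugate exponent $q=1$, matching exactly the $W^{1,1}$ regularity of $u$) shows that $\bar\beta$ is itself a renormalized solution. Theorem \ref{corollariodatoinizialeconservato1a} applied at exponent $2$ then yields $\int\bar\beta(t)^2\,dx=\int\beta(\rho^0)^2\,dx$ for a.e.\ $t$. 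Since $\beta^2\in\mathcal{A}$, the weak-$*$ limit $\overline{\beta^2}$ of $\beta(\rho^{n_k})^2$ satisfies the analogous weak PDE with datum $\beta^2(\rho^0)$; inserting test functions $\phi_h=\psi(t)\varphi_h(x)$ with $\varphi_h\nearrow 1$ and invoking the coarea/boundary-cutoff argument from the proof of Theorem \ref{unicitalemmarho0} (valid because $u\in L^1(0,T;C(\overline\Omega))$ vanishes on $\partial\Omega$) gives $\int\overline{\beta^2}(t)\,dx=\int\beta^2(\rho^0)\,dx$ a.e. Jensen's inequality for weak-$*$ limits gives $\bar\beta^2\le\overline{\beta^2}$ pointwise a.e., so the equality of the two integrals forces pointwise equality $\bar\beta^2=\overline{\beta^2}$ a.e. Expanding $\|\beta(\rho^{n_k})-\bar\beta\|_{L^2((0,T)\times\Omega)}^2$ and using the weak-$*$ limits of $\beta(\rho^{n_k})^2$ and $\beta(\rho^{n_k})$, this norm converges to zero, giving strong $L^2$ convergence. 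Choosing $\beta(r)=\arctan r\in\mathcal{A}$, which is strictly monotone, strong $L^2$ implies convergence in measure on $(0,T)\times\Omega$, and continuity of $\tan$ on $(-\pi/2,\pi/2)$ upgrades this to $\rho^{n_k}\to\rho$ in measure. By Theorem \ref{propositionconvmeas31}(ii), $\bar\beta=\beta(\rho)$ for every $\beta\in\mathcal{A}$, and passing the renormalized equation once more to the limit (now with strong $L^r$ convergence of $\beta(\rho^{n_k})$ for any $r<\infty$ by convergence in measure plus $L^\infty$ boundedness) identifies $\rho$ as a renormalized solution with datum $\rho^0$ and velocity $u$.

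Finally, Theorem \ref{corollariodatoinizialeconservato1a} applied to $\rho$ and to each $\rho^{n_k}$ yields $\|\rho^{n_k}(t)\|_p=\|\rho_{n_k}^0\|_p\to\|\rho^0\|_p=\|\rho(t)\|_p$, uniformly in $t$ since both sides are constant in $t$. For the weak side, the integrated renormalized equation combined with the equi-uniform smallness of $\int_s^t\|u^{n_k}(\tau)\|_{L^1(\Omega)}\,d\tau$ (from $u^{n_k}\to u$ in $L^1(0,T;L^1(\Omega))$) gives equi-uniform continuity in $t$ of the scalar curves $t\mapsto\int\beta(\rho^{n_k}(t))\varphi\,dx$ for each $\beta\in\mathcal{A}$ and $\varphi\in C_c^\infty(\Omega)$; removing the $\beta$-truncation via the $L^p$-to-$L^1$ equi-integrability from Lemma \ref{teoremaunifintgfn1a} and a density argument in $L^q(\Omega)$ then yields $\rho^{n_k}(t)\rightharpoonup\rho(t)$ in $L^p(\Omega)$ uniformly in $t$. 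Uniform convexity of $L^p$ ($p\in(1,\infty)$) promotes uniform weak plus uniform norm convergence to uniform strong $L^p(\Omega)$ convergence, which is exactly the hypothesis needed to apply Theorem \ref{lemmaan194} with $X=L^p(\Omega)$, yielding $\rho^{n_k}\to\rho$ in $L^\infty(0,T;L^p(\Omega))$. Uniqueness of the renormalized solution with datum $\rho^0$ and velocity $u$ forces the full sequence $\rho^n$, not merely the extracted subsequence, to converge.
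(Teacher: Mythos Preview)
Your argument is correct and tracks the paper's proof closely: both establish strong $L^2$ convergence of $\beta(\rho^{n_k})$ via the identity $\overline{\beta^2}=\bar\beta^{\,2}$, deduce convergence in measure of $\rho^{n_k}$, identify the limit as a renormalized solution, and then upgrade to $C([0,T];L^p)$ via norm conservation (Theorem~\ref{corollariodatoinizialeconservato1a}) plus weak convergence at each time and Theorem~\ref{lemmaan194}.

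The one genuine tactical difference is how you obtain $\overline{\beta^2}=\bar\beta^{\,2}$. The paper argues that both $\bar\beta^{\,2}$ (obtained by renormalizing the weak equation for $\bar\beta$ with $\alpha(t)=t^2$) and $\overline{\beta^2}$ solve the \emph{same} weak transport equation with the \emph{same} datum $(\beta(\rho^0))^2$, and then invokes the uniqueness Theorem~\ref{unicitalemmarho0} at $p=\infty$. You instead combine the pointwise Jensen inequality $\bar\beta^{\,2}\le\overline{\beta^2}$ with equality of their spatial integrals (both equal to $\int\beta(\rho^0)^2$, via Theorem~\ref{corollariodatoinizialeconservato1a} and the boundary-cutoff argument respectively). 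Your route is slightly more hands-on but avoids quoting the uniqueness theorem at this step; the paper's route is shorter once Theorem~\ref{unicitalemmarho0} is available. A second minor variation: to pass from strong $L^2$ convergence of $\beta(\rho^{n_k})$ to convergence in measure of $\rho^{n_k}$ you use the single choice $\beta=\arctan$ and invert via $\tan$, whereas the paper invokes Theorem~\ref{propositionconvmeas31}(iii) with the family $\beta_k$. Your version works but you should note that the weak-$*$ limit of $\arctan(\rho^{n_k})$ a priori takes values in $[-\pi/2,\pi/2]$; the endpoints are excluded a.e.\ by Fatou and the uniform $L^p$ bound, after which continuity of $\tan$ applies.
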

\begin{remark} We are supposing in this theorem the existence of a velocity field $u\in L^1(0,T;W_0^{1,1}(\Omega))$, with $\nabla \cdot u=0$, but asking merely the convergence $u^n\to u$ in $L^1(0,T;L^1(\Omega))$ and not in a more regular space. This is the crucial point of this approach. 
\end{remark}
\begin{remark} The aim of the theorem is to prove that $\rho_n\to \rho$ in $C([0,T];L^p(\Omega))$, where $\rho$ is a renormalized solution of the weak transport equation with velocity field $u$ and initial density $\rho^0$. If we know \textit{a priori} that $\rho_n\stackrel{*}{\rightharpoonup} \overline \rho$ in $L^\infty(0,T;L^\infty(\Omega))$ to some $\overline \rho\in L^\infty(0,T;L^p(\Omega))$, with $\overline \rho$ weak solution to the transport equation with field $u$ and initial density $\rho^0$, then by uniqueness theorem $\rho\equiv \overline \rho$, and so $\rho_n\to \overline \rho$ in $C([0,T];L^p(\Omega))$. This is the main application of this stability theorem (and the reason that lead to this statement). For example, it is useful to prove a stronger convergence in the article \cite{kimchoe1}.
\end{remark}
\begin{proof} We start with pointwise stability. Let $\beta$ an admissible function, and define $v_n:=\beta(\rho_n)$, where $\rho^n$ is renormalized solution to the transport equation with velocity filed $u^n$ and initial density $\rho_0^n$. Then, since $\beta$ is bounded, we have that $v_n\in L^\infty(0,T;L^\infty(\Omega))$. Moreover, observe that, by the hypothesis,
$$-\int_0^T \bigg(\int_\Omega  \beta(\rho^n) \frac{\partial \phi}{\partial t}\ dx\bigg) \ dt-\int_\Omega \beta(\rho_n^0)\phi(x,0) \ dx +\int_0^T\bigg(\int_\Omega \beta(\rho^n) u^n\cdot \nabla \phi \ dx\bigg) \ dt=0$$
and this can be rewritten as 
\begin{equation}-\int_0^T \bigg(\int_\Omega  v_n \frac{\partial \phi}{\partial t}\ dx\bigg) \ dt-\int_\Omega v_n^0\phi(x,0) \ dx +\int_0^T\bigg(\int_\Omega v_n \big(u^n\cdot \nabla \phi\big)\ dx\bigg) \ dt=0\end{equation}
where $\beta(\rho_n^0)=:v_n^0$. On the other hand, the function $\beta^2$ is admissible yet, and, as above, $w_n:=v_n^2\in L^\infty(0,T;L^\infty(\Omega))$. Moreover, as above, 
\begin{equation}-\int_0^T \bigg(\int_\Omega  w_n \frac{\partial \phi}{\partial t}\ dx\bigg) \ dt-\int_\Omega w_n^0\phi(x,0) \ dx +\int_0^T\bigg(\int_\Omega w_n \big(u^n\cdot \nabla \phi\big)\ dx\bigg) \ dt=0\end{equation}
and $w_n^0:=(v_n^0)^2$. Since the sequences are bounded in $L^\infty(0,T;L^\infty(\Omega))$, we have that exist $v,w\in L^\infty(0,T;L^\infty(\Omega))$ such that, up to extract a subsequence,
$$v_n\stackrel{*}{\rightharpoonup} v, \quad w_n\stackrel{*}{\rightharpoonup} w\quad \text{in $L^\infty(0,T;L^\infty(\Omega))$}$$
In particular 
\begin{equation}\|v\|_{L^\infty(0,T;L^\infty(\Omega))}\leq \liminf_{n\to\infty}\|v_n\|_{L^\infty(0,T;L^\infty(\Omega))}\leq C_\beta\end{equation}
where $\beta(s)\leq C_\beta$ for every $s\in \R$. 
%Observe that, up to extract a subsequence, we can suppose that $\rho_n^0$ converges to $\rho^0$ almost everywhere in $\Omega$.
%\\
%\\
%also $\|v_n^0\|_\infty=\|\beta(\rho_n^0)\|_\infty\leq C_\beta$, and so, up to extract a subsequence, we can suppose that $v_n^0\stackrel{*}{\rightharpoonup} v^0$ in $L^\infty(\Omega)$, for some $v^0\in L^\infty(\Omega)$.
Since $u^n\to u$ in $L^1(0,T;L^1(\Omega))$, we have that, considering for example the case of $v_n$ (that of $w_n$ is analogous), 
\begin{equation}\int_0^T\bigg(\int_\Omega v_n \frac{\partial \phi}{\partial t} \ dx \bigg) \ dt\to \int_0^T\bigg(\int_\Omega v \frac{\partial \phi}{\partial t} \ dx \bigg) \ dt, \qquad \int_\Omega v_n^0 \phi(0,x) \ dx \to \int_\Omega v^0 \phi(0,x) \ dx\end{equation}
since $\partial_t \phi\in L^1(0,T;L^1(\Omega))$ and $\phi(0,x)\in L^1(\Omega)$, and using the convergence of the densities in the hypothesis. Moreover
$$\bigg|\int_0^T\bigg(\int_\Omega v_n \big(u^n\cdot \nabla \phi\big)\ dx\bigg) \ dt-\int_0^T\bigg(\int_\Omega v \big(u\cdot \nabla \phi\big)\ dx\bigg) \ dt\bigg|\leq$$
$$\leq \bigg|\int_0^T\bigg(\int_\Omega v_n \big((u^n-u)\cdot \nabla \phi\big)\ dx\bigg) \ dt-\int_0^T\bigg(\int_\Omega (v-v_n) \big(u\cdot \nabla \phi\big)\ dx\bigg) \ dt\bigg|\leq$$
$$\leq \int_0^T\int_\Omega |v_n||u^n-u||\nabla \phi| \ dx \ dt+\bigg|\int_0^T\bigg(\int_\Omega (v-v_n) \big(u\cdot \nabla \phi\big)\ dx\bigg) \ dt\bigg|\leq$$
\begin{equation}\leq M\|v_n\|_{L^\infty(0,T;L^\infty(\Omega))}\|u^n-u\|_{L^1(0,T;L^1(\Omega))}+\bigg|\int_0^T\bigg(\int_\Omega (v-v_n) \big(u\cdot \nabla \phi\big)\ dx\bigg) \ dt\bigg|\end{equation}
where $M$ is such that $|\nabla\phi|\leq M$. Since $\|v_n\|_{L^\infty(0,T;L^\infty(\Omega))}$ is bounded and $u^n\to u$ in $L^1(0,T;L^1(\Omega))$, we only have to prove that also the other term vanishes. But 
\begin{equation}\int_0^T\int_\Omega |u||\nabla\phi|\ dx \ dt\leq M\|u\|_{L^1(0,T;L^1(\Omega))}<\infty\end{equation}
that is $u\cdot \nabla \phi\in L^1(0,T;L^1(\Omega))$, and since $v_n\stackrel{*}{\rightharpoonup} v$ in $L^\infty(0,T;L^\infty(\Omega))$, we have that also this term vanishes. So finally
\begin{equation}\label{traspeqforvren1}-\int_0^T\bigg(\int_\Omega v\frac{\partial \phi}{\partial t} \ dx\bigg)\ dt-\int_\Omega v^0(x) \ \phi(x,0) \ dx+\int_0^T\bigg(\int_\Omega v \ \big(u\cdot \nabla \phi\big) \ dx\bigg) \ dt=0\end{equation}
and, in the same way,
\begin{equation}-\int_0^T\bigg(\int_\Omega w\frac{\partial \phi}{\partial t} \ dx\bigg)\ dt-\int_\Omega w^0(x) \ \phi(x,0) \ dx+\int_0^T\bigg(\int_\Omega w \ \big(u\cdot \nabla \phi\big) \ dx\bigg) \ dt=0\end{equation}
%\begin{remark}\label{osservazione98beta} Observe that, since $\rho_0^n\to \rho_0$ almost everywhere and $\beta$ is bounded, we have $\beta^{\alpha}(\rho_0^n)\to \beta^\alpha(\rho_0)$ in $L^1(\Omega)$, with $\alpha\in \{1,2\}$. $\square$
%\end{remark}
Equation \eqref{traspeqforvren1} says that $v$ is a weak solution, with initial condition $v^0$; by the previous theorem it is a renormalized solution, since $u\in L^1(0,T;W^{1,q}(\Omega))$ and $\nabla\cdot u=0$.\\
\noindent Choosing $\alpha(t)=t^2$, approaching this function with admissible $\alpha_k(t)$ such that $\alpha_k(t)\leq t^2$ and $\alpha_k(t)\to t^2$ as $k\to\infty$, for every $t\in \R$. So we have that 
\begin{equation}-\int_0^T\bigg(\int_\Omega \alpha_k(v)\frac{\partial \phi}{\partial t} \ dx\bigg)\ dt-\int_\Omega \alpha_k(v^0(x)) \ \phi(x,0) \ dx+\int_0^T\bigg(\int_\Omega \alpha_k(v) \ \big(u\cdot \nabla \phi\big) \ dx\bigg) \ dt=0\end{equation}
implies, letting $k\to\infty$,
\begin{equation}-\int_0^T\bigg(\int_\Omega v^2\frac{\partial \phi}{\partial t} \ dx\bigg)\ dt-\int_\Omega (v^0)^2(x) \ \phi(x,0) \ dx+\int_0^T\bigg(\int_\Omega v^2 \ \big(u\cdot \nabla \phi\big) \ dx\bigg) \ dt=0\end{equation}
since $|v|^2\leq \|v\|_{L^\infty(0,T;L^\infty(\Omega))}^2\leq C_\beta^2$ and $v^0=\beta(\rho^0)\leq C_\beta$, so that the integrals are well-posed.\\
\noindent So, $v^2$ is a weak solution to the transport equation with initial condition $(v^0)^2$. But also $w$ is a weak solution to the same transport equation with initial condition $(v^0)^2$. By uniqueness theorem \ref{unicitalemmarho0}, since $v,w\in L^\infty(0,T;L^\infty(\Omega))$ and $u\in L^1(0,T;W_0^{1,1}(\Omega))\cap L^1(0,T;C(\overline\Omega))$, with $\nabla\cdot u=0$, we have $v^2\equiv w$.\\
\noindent This conclusion leads to the fact that $v_n^2\stackrel{*}{\rightharpoonup} v^2$ in $L^\infty(0,T;L^\infty(\Omega))$. Moreover, notice that 
\begin{equation}\|v_n-v\|_{L^2(0,T;L^2(\Omega))}^2=\langle v_n,v_n\rangle_{L^2(0,T;L^2(\Omega))}-2\langle v_n,v\rangle_{L^2(0,T;L^2(\Omega))}+\langle v,v\rangle_{L^2(0,T;L^2(\Omega))}\end{equation}
Observe that $\langle v_n,v\rangle_{L^2(0,T;L^2(\Omega))}\to \langle v,v\rangle_{L^2(0,T;L^2(\Omega))}$, since $v\in L^\infty(0,T;L^\infty(\Omega))\subset L^1(0,T;L^1(\Omega))$ and $v_n\stackrel{*}{\rightharpoonup} v$. Moreover, if we choose the function $\phi\equiv 1$ on $(0,T)\times \Omega$, that is in $L^1(0,T;L^1(\Omega))$, we have  
\begin{equation}\label{scelgophipernorm1a}\|v_n\|_{L^2(0,T;L^2(\Omega))}^2=\int_0^T\bigg(\int_\Omega |v_n|^2 \ dx\bigg)\ dt\equiv \int_0^T\bigg(\int_\Omega v_n^2\ \phi \ dx\bigg)\ dt\to\int_0^T\bigg(\int_\Omega v^2 \ \phi \ dx\bigg)\ dt=\int_0^T\bigg(\int_\Omega v^2 \ dx\bigg)\ dt\end{equation}
as $n\to\infty$, since $\phi\in L^1(0,T;L^1(\Omega))$ and $v_n^2\stackrel{*}{\rightharpoonup} v^2$ in $L^\infty(0,T;L^\infty(\Omega))$. This means that $v_n\to v$ in $L^2(0,T;L^2(\Omega))$.
\begin{remark} We can choose $\alpha(t)=|t|^p$, with $p\in (1,\infty)$, and obtain the same result in $L^p(0,T;L^p(\Omega))$. In fact, this choice implies that $|v_n|^p\stackrel{*}{\rightharpoonup} |v|^p$ in $L^\infty(0,T;L^\infty(\Omega))$. Moreover, $L^p(0,T;L^p(\Omega))$ is the dual of $L^q(0,T;L^q(\Omega))$, with $q$ and $q$ conjugate exponents. So, for every $\nu\in L^q(0,T;L^q(\Omega))$, we have $\langle v_n,\nu\rangle_{p,q} \to \langle v,\nu \rangle_{p,q}$, as $n\to\infty$, where $\langle \cdot,\cdot \rangle_{p,q}\equiv\langle \cdot ,\cdot \rangle_{L^p(0,T;L^p(\Omega)),L^q(0,T;L^q(\Omega))}$ is the dual pairing between $L^p(0,T;L^p(\Omega))$ and $L^q(0,T;L^q(\Omega))$. In fact 
$$\langle v_n,\nu\rangle_{p,q}=\int_0^T\bigg(\int_\Omega v_n\cdot \nu \ dx \bigg) \ dt\to \int_0^T\bigg(\int_\Omega v\cdot \nu \ dx \bigg) \ dt=\langle v,\nu \rangle_{p,q}$$
as $n\to\infty$, since $v_n\stackrel{*}{\rightharpoonup} v$ in $L^\infty(0,T;L^\infty(\Omega))$ and $\nu\in L^q(0,T;L^q(\Omega))\subseteq L^1(0,T;L^1(\Omega))$, being $q\geq1$. This means that $v_n\stackrel{*}{\rightharpoonup} v$ in $L^p(0,T;L^p(\Omega))$. Since, choosing $\phi\equiv 1$ as in \eqref{scelgophipernorm1a}, $|v_n|^p\stackrel{*}{\rightharpoonup} |v|^p$ in $L^\infty(0,T;L^\infty(\Omega))$ implies $\|v_n\|_{L^p(0,T;L^p(\Omega))}\to \|v\|_{L^p(0,T;L^p(\Omega))}$ as $n\to\infty$, it follows that $v_n\to v$ in $L^p(0,T;L^p(\Omega))$ in the strong sense.
\end{remark}
\noindent We now want to show that $v=\beta(\rho)$, for some $\rho\in L^\infty(0,T;L^p(\Omega))$, so that we have the convergence (in $L^p(0,T;L^p(\Omega))$) of $\beta(\rho_n)$ to $v$; this implies (since $v$ satisfies the weak transport equation) that $\rho$ is a renormalized solution (and so, if we assume also that $u\in L^1(0,T;L^q(\Omega))$, by theorem \ref{lemmaconsistency} a (unique) solution).\\
%Consider the following collection of admissible functions $\gamma_k$ such that 
%\begin{equation}\begin{cases} \gamma_k(t)=0 & |t|\leq \frac1k\\
%\gamma_k'(t)>0 & |t|>\frac1k\\
%\gamma_k,\gamma_k' \ \text{are bounded on $\R$}
%\end{cases}
%\end{equation} 
\noindent We know that $v_n=\beta(\rho_n)$ converges to $v\in L^2(0,T;L^2(\Omega))$ in $L^2(0,T;L^2(\Omega))\simeq L^2((0,T)\times \Omega)$. This implies that $v_n$ converges to $v$ in measure, that is $\beta(\rho_n)$ converges in measure to $v$. Since $|\Omega\times (0,T)|<\infty$ and, by the hypothesis, 
\begin{equation}\|\rho_n\|_{L^p(0,T;L^p(\Omega))}\leq C\|\rho_n\|_{L^\infty(0,T;L^p(\Omega))}\leq C\left(\sup_{n\in\N}\|\rho_n\|_{L^\infty(0,T;L^p(\Omega))}\right)<\infty\end{equation}
using propistion \ref{propositionconvmeas31}, we have that exists $\rho$, measurable function on $\Omega\times(0,T)$, such that $\rho_n\to \rho$ in measure. But, if $\beta\in C^1(\R)$ is an admissible function, we have, by theorem \ref{propositionconvmeas31}, that $v_n\equiv\beta(\rho_n)\to \beta(\rho)$ in measure. It follows that $v=\beta(\rho)$. In fact, we have 
\begin{equation}\|\beta(\rho)-v\|_{L^2(0,T;L^2(\Omega))}\leq \|\beta(\rho)-\beta(\rho_n)\|_{L^2(0,T;L^2(\Omega))}+\|\beta(\rho_n)-v\|_{L^2(0,T;L^2(\Omega))}\end{equation}
We know from above that $\|\beta(\rho_n)-v\|_{L^2(0,T;L^2(\Omega))}\to0$ as $n\to\infty$. On the other hand, $\beta(\rho_n)$ converges to $\beta(\rho)$ in measure and $|\beta(\rho_n)|\leq C_\beta$ implies that $\beta(\rho_n)$ has an integrable bound (uniform in $n$) in $L^2(0,T;L^2(\Omega))$. So, again by theorem \ref{propositionconvmeas31}, $\|\beta(\rho_n)-\beta(\rho)\|_{L^2(0,T;L^2(\Omega))}\to 0$ as $n\to\infty$. So, $\beta(\rho)=v\in L^2(0,T;L^2(\Omega))$.
\begin{remark} The same argument holds if $L^2(0,T;L^2(\Omega))$ is replaced by $L^p(0,T;L^p(\Omega))$. 
\end{remark}
\noindent So, the measurable function $\rho$ is a renormalized solution of the weak transport equation, since $v=\beta(\rho)$ is a solution. Now, using theorem \ref{corollariodatoinizialeconservato1a}, where only the measurability of $\rho$ is required, together with the fact that $u\in L^1(0,T;C(\overline\Omega))$, $u\big|_{\partial\Omega}\equiv0$, we deduce that $\|\rho(t_0)\|_p=\|\rho_0\|_p$ for almost every $t_0\in (0,T)$. So $\rho\in L^\infty(0,T;L^p(\Omega))$, and this implies that $\rho$ is a renormalized solution to the weak transport equation with initial density $\rho_0$, in the class $L^\infty(0,T;L^p(\Omega))$. That is, if $\beta$ is an admissible function, with $M>0$ such that $|\beta(s)|\leq M$ for every $s\in\R$, we have 
\begin{equation}-\int_0^T\bigg(\int_\Omega \beta(\rho) \ \partial_t \phi \ dx\bigg) \ dt -\int_\Omega \beta(\rho^0(x)) \phi(0,x)\ dx+\int_0^T\bigg(\int_\Omega \beta(\rho) \  (u\cdot \nabla\phi) \ dx\bigg) \ dt=0\end{equation}
Choosing $\phi\in C_c^\infty([0,T)\times\Omega)$ as in \eqref{reformofeq812813}, we have, for every $t_0\in[0,T]$ (eventually redefining the function out of a zero measure set)
\begin{equation}\int_\Omega \beta(\rho(t_0,x))\varphi(x) \ dx=\int_\Omega \beta(\rho^0(x)) \varphi(x)-\int_0^{t_0}\bigg(\int_\Omega \beta(\rho(x,t)) u(x,t)\cdot \nabla \varphi(x) \ dx\bigg) \ dt\end{equation}
Moreover, by the hypothesis, $\rho_n$ is renormalized solution to the transport equation with velocity field $u^n$ and initial density $\rho_n^0$. It follows that, if $t_0\in[0,T]$, after rearranging over a zero measure set, we have 
\begin{equation}\int_\Omega \beta(\rho_n(t_0,x))\varphi(x) \ dx=\int_\Omega \beta(\rho_n^0(x)) \varphi(x)-\int_0^{t_0}\bigg(\int_\Omega \beta(\rho_n(x,t)) u_n(x,t)\cdot \nabla \varphi(x) \ dx\bigg) \ dt\end{equation}
Let now $\{t_n\}_{n\in\N}\subseteq[0,T]$ be a sequence such that $t_n\to t_0\in [0,T]$, and consider that 
$$\int_\Omega \beta(\rho_n(t_n,x))\varphi(x) \ dx=\int_\Omega \beta(\rho_n^0(x)) \varphi(x)-\int_0^{t_n}\bigg(\int_\Omega \beta(\rho_n(x,t)) u_n(x,t)\cdot \nabla \varphi(x) \ dx\bigg) \ dt$$
We want to show that 
\begin{equation}\label{convergencebetaadmboundM1a}\lim_{n\to\infty}\int_\Omega \beta(\rho_n(t_n,x))\varphi(x) \ dx=\int_\Omega \beta(\rho(t_0,x))\varphi(x) \ dx\end{equation}
%\\
%\\
%$\bigstar$ 
%\newpage
for every $\varphi\in C_c^\infty(\Omega)$. 
\begin{remark} [Proof of \eqref{convergencebetaadmboundM1a}] It is a calculation. In fact 
$$\int_\Omega \big(\beta(\rho_n(t_n,x))-\beta(\rho(t_0,x))\big) \varphi(x) \ dx=\int_\Omega \big(\beta(\rho_n^0(x))-\beta(\rho^0(x))\big) \varphi(x) \ dx-$$
$$-\bigg\{\int_0^{t_n}\bigg(\int_\Omega \beta(\rho_n(x,t)) u_n(x,t)\cdot \nabla \varphi(x) \ dx\bigg) \ dt-\int_0^{t_0}\bigg(\int_\Omega \beta(\rho(x,t)) u(x,t)\cdot \nabla \varphi(x) \ dx\bigg) \ dt\bigg\}=$$
$$=\int_\Omega \big(\beta(\rho_n^0(x))-\beta(\rho^0(x))\big) \varphi(x) \ dx-\int_{0}^{t_0}\bigg(\int_\Omega \beta(\rho_n) u_n\cdot \nabla \varphi \ dx -\int_\Omega \beta(\rho) u\cdot \nabla \varphi \ dx\bigg) \ dt \ +$$
\begin{equation}-\int_{t_0}^{t_n}\int_\Omega \beta(\rho_n) u_n\cdot \nabla \varphi \ dx \ dt\end{equation}
Observe, first of all, that 
\begin{equation}\bigg|\int_\Omega \big(\beta(\rho_n^0(x))-\beta(\rho^0(x))\big) \varphi(x) \ dx\bigg|\leq \|\varphi\|_\infty \|\beta(\rho_n^0)-\beta(\rho^0)\|_1\to 0\end{equation}
as $n\to\infty$. Furthermore
$$\bigg|\int_{0}^{t_0}\bigg(\int_\Omega \beta(\rho_n) u_n\cdot \nabla \varphi \ dx -\int_\Omega \beta(\rho) u\cdot \nabla \varphi \ dx\bigg) \ dt\bigg|=\bigg|\int_0^{t_0}\bigg(\int_\Omega \big(\beta(\rho_n)u_n-\beta(\rho) u\big) \cdot \nabla \varphi \ dx\bigg) \ dt\bigg|=$$
$$=\bigg|\int_0^{t_0}\bigg(\int_\Omega \beta(\rho_n)\big(u_n- u\big) \cdot \nabla \varphi \ dx\bigg) \ dt+\int_0^{t_0}\bigg(\int_\Omega \big(\beta(\rho_n)-\beta(\rho)\big) u \cdot \nabla \varphi \ dx\bigg) \ dt\bigg|\leq$$
\begin{equation}\leq M\|\nabla\varphi\|_\infty\int_0^T\int_\Omega |u_n-u| \ dx \ dt+\bigg|\int_0^{t_0}\bigg(\int_\Omega \big(\beta(\rho_n)-\beta(\rho)\big) u \cdot \nabla \varphi \ dx\bigg) \ dt\bigg|\to 0\end{equation}
as $n\to\infty$, since $u^n\to u$ in $L^1(0,T;L^1(\Omega))$ and $\beta(\rho_n)\stackrel{*}{\rightharpoonup}v=\beta(\rho)$ in $L^\infty(0,T;L^\infty(\Omega))$ and $\chi_{(0,t_0)}u\cdot \nabla \varphi\in L^1(0,T;L^1(\Omega))$. Moreover, we have 
$$\bigg|\int_{t_0}^{t_n}\int_\Omega \beta(\rho_n) u_n\cdot \nabla \varphi \ dx \ dt\bigg|\leq\bigg|\int_{t_0}^{t_n}\bigg(\int_\Omega \beta(\rho_n) u_n\cdot \nabla \varphi \ dx -\int_\Omega \beta(\rho) u \cdot \nabla \varphi \ dx\bigg) \ dt\bigg|+$$
$$+\bigg|\int_{t_0}^{t_n}\bigg(\int_\Omega \beta(\rho) u\cdot \nabla \varphi\bigg) \ dt\bigg|\leq \bigg|\int_0^T\chi_{(t_0,t_n)}(t)\bigg(\int_\Omega \beta(\rho_n) u_n\cdot \nabla \varphi \ dx -\int_\Omega \beta(\rho) u \cdot \nabla \varphi \ dx\bigg) \ dt\bigg|+$$
$$+M\|\nabla\varphi\|_\infty\int_{t_0}^{t_n}\|u\|_1 \ dt\leq \bigg|\int_0^T\chi_{(t_0,t_n)}(t)\bigg(\int_\Omega \beta(\rho_n)(u_n-u)\cdot \nabla \varphi \ dx +\int_\Omega (\beta(\rho_n)-\beta(\rho))u\cdot \nabla \varphi \ dx\bigg) \ dt\bigg|+$$
\begin{equation}+M\|\nabla\varphi\|_\infty\int_{t_0}^{t_n}\|u\|_1 \ dt\leq M\|\nabla\varphi\|_\infty\|u_n-u\|_{L^1(0,T;L^1(\Omega))}+3M\|\nabla\varphi\|_\infty\int_{t_0}^{t_n}\|u\|_1 \ dt\to0\end{equation}
as $n\to\infty$, since $t_n\to t_0$, $u^n\to u$ in $L^1(0,T;L^1(\Omega)$ as $n\to\infty$. So we have proved \eqref{convergencebetaadmboundM1a}. 
\end{remark}
\noindent Starting from \eqref{convergencebetaadmboundM1a}, we want to show that also 
\begin{equation}\label{weakconvergencewoutbeta1a}\lim_{n\to\infty}\int_\Omega \rho_n(t_n,x) \varphi(x) \ dx =\int_\Omega \rho(t_0,x) \varphi(x) \ dx\end{equation}
for every $\varphi\in C_c^\infty(\Omega)$ and $t_n\to t_0$. 
\begin{remark}\label{osservazioneconvdebpq1a} If \eqref{weakconvergencewoutbeta1a} holds, then it is true for every $\varphi\in L^q(\Omega)$. Moreover, we have that 
\begin{equation}\|\rho_n(t_n)\|_p=\|\rho_0^n\|_p\stackrel{n\to\infty}{\longrightarrow} \|\rho_0\|_p=\|\rho_0(t_0)\|_p\end{equation}
thanks to the convergence of $\rho_0^n\to \rho_0$ in $L^p(\Omega)$ by hypothesis and using corollary \ref{corollariodatoinizialeconservato1a}.
%, since $u_n,u\in L^1(0,T;L^{1+\varepsilon_0}(\Omega))$, where we can fix $\varepsilon'\in \left(0,\frac{1}{n-1}\right)$ and choose\footnote{Since, if $\varepsilon'\in \left(1,\frac{n}{n-1}\right)$, then $\|u\|_{L^{\varepsilon'}(\Omega)}\leq C\|u\|_{W^{1,1}(\Omega)}$, by Sobolev inequalities. So we are in the hypothesis of corollary \ref{corollariodatoinizialeconservato1a}. In particular, if $n=1$, then $u$ is continuous in space, and so in every Lebesgue space.
%} $0<\varepsilon_0<\min\{\varepsilon,\varepsilon'\}$, and $\rho,\rho_n$ are renormalized solutions (see the footnote below).
So, it follows that $\rho_n(t_n)\to \rho(t_0)$ in $L^p(\Omega)$. From theorem \ref{lemmaan194} it follows that $\rho_n\to \rho$ in $C([0,T];L^p(\Omega))$, as $n\to\infty$, that is the thesis. 
\end{remark}
\begin{remark}[Proof of \eqref{weakconvergencewoutbeta1a}] So we have to prove \eqref{weakconvergencewoutbeta1a}. Given $M\in (0,\infty)$, consider the function
\begin{equation}\beta_M(s):=\begin{cases} s & |s|\leq M\\
M & s>M\\
-M & s<-M
\end{cases}\end{equation}
We have to fix this $M$ in a precise way. Let $t_n\to t_0\in [0,T]$ and a consider the sequence $\{\rho_n(t_n),\rho(t_0)\}_{n\in \N}\subseteq L^p(\Omega)$. Moreover, this sequence is bounded in $L^p(\Omega)$,
%$$\|\rho_n(t_n)\|_p, \|\rho(t_0)\|_p\leq \|\rho(t_0)\|+\sup_n\|\rho_n\|_{L^\infty(0,T;L^p(\Omega))}$$
since $\displaystyle \|\rho_n(t_n)\|_p\equiv \|\rho_0^n\|_p\leq C$ by the convergence in the hypothesis. So, using lemma \ref{teoremaunifintgfn1a}, we have that for every $\varepsilon>0$ exists $M_\varepsilon>0$ such that 
\begin{equation}\label{unifintofthisseqdens1a}\int_{\{x\in\Omega: \ |\rho(t_0,x)|>M_\varepsilon\}}|\rho(t_0,x)| \ dx<\varepsilon, \quad \int_{\{x\in\Omega: \ |\rho_n(t_n,x)|>M_\varepsilon\}}|\rho_n(t_n,x)| \ dx<\varepsilon, \qquad \forall \ n\in\N\end{equation}
Notice that \eqref{unifintofthisseqdens1a} implies that 
$$M_\varepsilon|\{x\in\Omega: \ |\rho(t_0,x)|>M_\varepsilon\}|<\varepsilon,\quad M_\varepsilon |\{x\in\Omega: \ |\rho_n(t_n,x)|>M_\varepsilon\}|<\varepsilon, \qquad \forall  \ n\in\N$$
that will be useful in a moment. Fix $\varepsilon>0$ and choose $M_\varepsilon>0$ as above. Then we can consider $\beta_{M_\varepsilon}$. Moreover, let $\beta_{M_\varepsilon}^k$ an admissible functions that coincides with $\beta_{M_\varepsilon}$ except two neighbourhoods, of $s=\pm M_\varepsilon$, and such that 
\begin{equation}|\beta_{M_\varepsilon}^{k}(s)|\leq |\beta_{M_\varepsilon}(s)|\leq M_\varepsilon, \qquad \sup_{s\in\R}|\beta_{M_\varepsilon}^k(s)-\beta_{M_\varepsilon}(s)|<\frac1k\end{equation} 
We can choose $k_\varepsilon\in\N$ such that $\displaystyle \frac{|\Omega|\|\varphi\|_\infty}{k_\varepsilon}<\varepsilon$. So, we can write 
\begin{equation}\label{scritturaspezzataintconvdebp1a}\int_\Omega \rho_n(t_n,x)\varphi(x) \ dx=\int_\Omega \beta_{M_\varepsilon}(\rho_n(t_n,x)) \varphi(x) \ dx +\int_\Omega \{\rho_n(t_n,x)-\beta_{M_\varepsilon}(\rho_n(t_n,x))\} \varphi(x) \ dx\end{equation}
We, at first, focus our attention to the second addend. We have 
$$\bigg|\int_\Omega \{\rho_n(t_n,x)-\beta_{M_\varepsilon}(\rho_n(t_n,x))\} \varphi(x) \ dx\bigg|=\bigg|\int_{\{x\in\Omega: \ |\rho_n(t_n,x)|\leq M_\varepsilon\}} \{\rho_n(t_n,x)-\beta_{M_\varepsilon}(\rho_n(t_n,x))\} \varphi(x) \ dx+$$
$$+\int_{\{x\in\Omega: \ |\rho_n(t_n,x)|>M_\varepsilon\}} \{\rho_n(t_n,x)-\beta_{M_\varepsilon}(\rho_n(t_n,x))\} \varphi(x) \ dx\bigg|=\bigg|\int_{\{x\in\Omega: \ |\rho_n(t_n,x)|>M_\varepsilon\}} \{\rho_n(t_n,x)-M_\varepsilon\} \varphi(x) \ dx\bigg|\leq$$
\begin{equation}\leq \|\varphi\|_\infty\bigg(\int_{\{x\in\Omega: \ |\rho_n(t_n,x)|>M_\varepsilon\}}|\rho_n(t_n,x)| \ dx +M_\varepsilon |\{x\in\Omega: \ |\rho_n(t_n,x)|>M_\varepsilon\}|\bigg)<2\varepsilon\|\varphi\|_\infty\end{equation}
If in equation \eqref{scritturaspezzataintconvdebp1a} we subtract the term $\displaystyle \int_\Omega \rho(t_0,x)\varphi(x) \ dx$, we have also to consider 
$$\bigg|\int_\Omega \beta_{M_\varepsilon}(\rho_n(t_n,x)) \varphi(x) \ dx- \int_\Omega \rho(t_0,x)\varphi(x) \ dx\bigg|\leq$$
\begin{equation}\leq\bigg|\int_\Omega \big(\beta_{M_\varepsilon}(\rho_n(t_n,x))-\beta_{M_\varepsilon}(\rho(t_0,x))\big) \varphi(x) \ dx\bigg|+\bigg| \int_\Omega \big(\beta_{M_\varepsilon}(\rho(t_0,x))-\rho(t_0,x)\big)\varphi(x) \ dx\bigg|\end{equation}
We deal at first with the second addend. Following the steps above, we have again 
\begin{equation}\bigg|\int_\Omega \big(\beta_{M_\varepsilon}(\rho(t_0,x))-\rho(t_0,x)\big)\varphi(x) \ dx\bigg|\leq 2\varepsilon\|\varphi\|_\infty\end{equation}
The other term can be written as 
$$\bigg|\int_\Omega \big(\beta_{M_\varepsilon}(\rho_n(t_n,x))-\beta_{M_\varepsilon}(\rho(t_0,x))\big) \varphi(x) \ dx\bigg|\leq$$
%$$=\bigg|\int_\Omega \big(\beta_{M_\varepsilon}(\rho_n(t_n,x))-\beta_{M_\varepsilon}^{k_\varepsilon}(\rho_n(t_n,x))\big) \varphi(x) \ dx+\int_\Omega \big(\beta_{M_\varepsilon}^{k_\varepsilon}(\rho_n(t_n,x))-\beta_{M_\varepsilon}^{k_\varepsilon}(\rho(t_0,x))\big) \varphi(x) \ dx+$$
%$$+\int_\Omega \big(\beta_{M_\varepsilon}^{k_\varepsilon}(\rho(t_0,x))-\beta_{M_\varepsilon}(\rho(t_0,x))\big) \varphi(x) \ dx\bigg|\leq$$
$$\leq\|\varphi\|_\infty\int_\Omega |\beta_{M_\varepsilon}(\rho_n(t_n,x))-\beta_{M_\varepsilon}^{k_\varepsilon}(\rho_n(t_n,x))| \ dx+\bigg|\int_\Omega \big(\beta_{M_\varepsilon}^{k_\varepsilon}(\rho_n(t_n,x))-\beta_{M_\varepsilon}^{k_\varepsilon}(\rho(t_0,x))\big) \varphi(x) \ dx\bigg| \ +$$
$$+ \ \|\varphi\|_\infty\int_\Omega |\beta_{M_\varepsilon}^{k_\varepsilon}(\rho(t_0,x))-\beta_{M_\varepsilon}(\rho(t_0,x))| \ dx\leq $$
\begin{equation}\leq \frac{2\|\varphi\|_\infty |\Omega|}{ k_\varepsilon}+\bigg|\int_\Omega \big(\beta_{M_\varepsilon}^{k_\varepsilon}(\rho_n(t_n,x))-\beta_{M_\varepsilon}^{k_\varepsilon}(\rho(t_0,x))\big) \varphi(x) \ dx\bigg|\end{equation}
We have that, for every admissible function, \eqref{convergencebetaadmboundM1a} holds, and so there exists $N=N(\beta_{M_\varepsilon}^{k_\varepsilon})\equiv N_\varepsilon$ such that, for every $n\geq N_\varepsilon$,
\begin{equation}\bigg|\int_\Omega \rho_n(t_n,x)\varphi(x) \ dx-\int_\Omega \rho(t_0,x)\varphi(x) \ dx\bigg|\leq 4\varepsilon\|\varphi\|_\infty+ \frac{2\|\varphi\|_\infty |\Omega|}{ k_\varepsilon} +\varepsilon\leq 4\varepsilon \|\varphi\|_\infty+3\varepsilon\end{equation}
that is 
\begin{equation}\lim_{n\to\infty}\int_\Omega \rho_n(t_n,x)\varphi(x) \ dx=\int_\Omega \rho(t_0,x)\varphi(x) \ dx\end{equation}
\end{remark}
This concludes the proof.
\end{proof}

%-------------------------------------------------------------

%\begin{description}
%\item[Word] Definition
%\item[Concept] Explanation
%\item[Idea] Text
%\end{description}
%
%\begin{itemize}[noitemsep] % [noitemsep] removes whitespace between the items for a compact look
%\item First item in a list
%\item Second item in a list
%\item Third item in a list
%\end{itemize}
%
%\subsubsection{Table}
%
%
%\begin{table}[hbt]
%\caption{Table of Grades}
%\centering
%\begin{tabular}{llr}
%\toprule
%\multicolumn{2}{c}{Name} \\
%\cmidrule(r){1-2}
%First name & Last Name & Grade \\
%\midrule
%John & Doe & $7.5$ \\
%Richard & Miles & $2$ \\
%\bottomrule
%\end{tabular}
%\label{tab:label}
%\end{table}
%
%Reference to Table~\vref{tab:label}. % The \vref command specifies the location of the reference

%--------------------------------------------------

%\cite{kimchoe1}
%
%\cite{evans10}
%
%\cite{noauthor_254a_2018}
%
%\cite{gilbarg2015elliptic}

%------------------------------------------------
%\cite{Figueredo:2009dg}

%----------------------------------------------------------------------------------------
%	BIBLIOGRAPHY
%----------------------------------------------------------------------------------------

\renewcommand{\refname}{\spacedlowsmallcaps{References}} % For modifying the bibliography heading

\bibliographystyle{unsrt}

\bibliography{sample.bib} % The file containing the bibliography

%----------------------------------------------------------------------------------------

\end{document}